\documentclass[11pt]{article}
\usepackage[utf8]{inputenc}

\usepackage{amssymb, amsthm, amsmath}
\usepackage{commath, hyperref}
\usepackage{enumerate}
\newtheorem{theorem}{Theorem}

\newtheorem{lemma}{Lemma}
\newtheorem{ass}{Assumption}
\newtheorem{corollary}{Corollary}
\usepackage{setspace}
\usepackage{natbib}
\usepackage{thm-restate}
\usepackage{graphicx}
\usepackage{caption}
\usepackage{subcaption}
\graphicspath{ {./plots/} }

\usepackage{fullpage}
\usepackage{comment}

\newcommand{\ba}{\boldsymbol{a}}

\newcommand{\bc}{\boldsymbol{c}}
\newcommand{\bd}{\boldsymbol{d}}

\newcommand{\bp}{\boldsymbol{p}}

\newcommand{\bs}{\boldsymbol{s}}

\newcommand{\bu}{\boldsymbol{u}}
\newcommand{\bv}{\boldsymbol{v}}

\newcommand{\bx}{\boldsymbol{x}}
\newcommand{\by}{\boldsymbol{y}}
\newcommand{\bz}{\boldsymbol{z}}

\usepackage[linesnumbered, ruled]{algorithm2e}
\allowdisplaybreaks

\setlength{\parskip}{0em}
\setlength{\parindent}{0in}

\usepackage[mathscr]{euscript}
\usepackage{tikz}

\title{Quasi-Newton Acceleration of EM and MM Algorithms via Broyden's Method}

\author{
  Medha Agarwal \\
  Department of Statistics \\
  University of Washington\\
  \texttt{medhaaga@uw.edu} \and
  Jason Xu\\
  Department of Statistical Science\\
  Duke University\\
  \texttt{jason.q.xu@duke.edu}
}

\begin{document}

\maketitle
\doublespacing

\begin{abstract}
The principle of majorization-minimization (MM) provides a general framework for eliciting effective algorithms to solve optimization problems. However, they often suffer from slow convergence, especially in large-scale and  high-dimensional data settings. This has drawn attention to  acceleration schemes designed exclusively for MM algorithms, but  many existing designs are either  problem-specific or rely on approximations and heuristics loosely inspired by the optimization literature. We propose a novel, rigorous  quasi-Newton method for accelerating any valid MM algorithm, cast as seeking a fixed point of the MM \textit{algorithm map}. The method does not require specific information or computation from the objective function or its gradient, and enjoys a limited-memory variant amenable to efficient computation in high-dimensional settings. By connecting our approach to Broyden's classical root-finding methods, we establish convergence guarantees and identify conditions for linear and super-linear convergence. These results are validated numerically and compared to peer methods in a thorough empirical study, showing that it achieves state-of-the-art performance across a diverse range of problems.
\end{abstract}

\section{Introduction}
Iterative procedures are becoming increasingly prevalent for statistical tasks that are cast as optimization of an objective function \citep{everitt2012introduction}. The canonical setting of minimizing a measure of fit together with a penalty term sits at the heart of statistics, yet challenges still arise from high dimensionality, missing data, constraints, and other aspects of contemporary data. 
The principle of majorization-minimization (MM) provides a framework for designing effective algorithms well-suited for such problems lacking a closed form solution. Perhaps the most well-known special case is the expectation-maximization (EM) algorithm, a workhorse for maximum likelihood estimation under missing data. Besides EM, instances of MM abound in statistics, ranging from matrix factorization \citep{lee1999learning} to nonconcave penalized likelihood estimation \citep{zou2008one}. The MM principle is attractive because it admits algorithms that (1) are  simple to implement and (2) provide stable performance by obeying monotonicity in the objective  \citep{dempster1977maximum,laird1978nonparametric}.

Consider $\bx \in \mathbb{R}^p$ and the goal of minimizing a ``difficult" objective function $f : \mathbb{R}^p \to \mathbb{R}$, i.e. finding
$\displaystyle \bx^\ast = \text{argmin}_{\bx} f(\bx)$, which is not available in closed form.  
An MM algorithm transfers this task onto an iterative scheme, successively minimizing a sequence of surrogate functions which dominate the objective function $f$ and are tangent to it at the current iterate  $\bx_k$. This  renders an MM algorithm map $F$, updating $\bx_k$ to $\bx_{k+1} = F(\bx_k)$.

However, MM algorithms typically converge at a locally linear rate, which can translate to impractically slow progress in many statistical problems, especially in high dimensions \citep{wu1983convergence,boyles1983convergence,meng1994global}. To address this issue,  a body of work designs general acceleration schemes for numerical optimization methods including Nesterov's schemes \citep{nesterov1983method}, SAG \citep{schmidt2017minimizing}, SAGA \citep{defazio2014saga}, catalyst acceleration \citep{lin2017generic}, and  SDCA \citep{shalev2014accelerated}. Special attention has also been given towards acceleration methods specifically designed for MM algorithms \citep{jamshidian1997acceleration, jamshidian1993conjugate, lange1995quasi, zhou2011quasi}. Broadly, these methods  seek additional information to better inform the search direction and/or step lengths of the unadorned algorithm. Improvements may come from high order differentials of the objective or  MM algorithm map that incur additional computational cost. As a result, it becomes necessary to balance these tradeoffs.

One approach employs  \textit{hybrid} accelerators  \citep{jamshidian1997acceleration} rely on working directly on the original objective function $f$  \citep{lange1995quasi, jamshidian1997acceleration, jamshidian1993conjugate}, often requiring second order derivatives denoted by $d^2f$. Approximations to $d^2f$ can then be obtained via Fisher scoring in the case of EM. Outside of the context of missing data, classical tools such as quasi-Newton and conjugate gradient methods can be applied to the objective to similar effect. However, an advantage of MM algorithms lies in sidestepping unwieldy objectives in favor of operating on simpler surrogates---for instance, EM works well because it bypasses the need to consider the observed data log-likelihood. Therefore, hybrid methods unfortunately fail to preserve this key advantage of MM algorithms.

An alternative is to instead consider accelerating the MM algorithm map $F$ directly in a way that is largely agnostic to the optimization objective. These have been classified as \textit{pure} accelerators; see \cite{jamshidian1997acceleration}. One class of pure first-order accelerators is given by quasi-Newton (QN) algorithms, which utilize an approximate Jacobian of $F$, denoted by $ d F$, to find a fixed point $\bx^*$ such that $F(\bx^*) = \bx^*$. Equivalently, the goal is to find the root of the MM residual $G(\bx) := F(\bx) - \bx$. \cite{jamshidian1997acceleration} explicitly apply Broyden's classical root-finding algorithm \citep{broyden1965class} for this purpose. As we outlay our method, we will demonstrate that further improvement can be achieved by modifying a general Broyden-type method \citep{broyden1973local} that leverages extra information from the MM map. The STEM and SQUAREM methods of \cite{varadhan2008simple} approximate this Jacobian  by a scalar multiple of the identity matrix. The QN method of \cite{zhou2011quasi} proposes a computationally elegant approximation derived from an assumption of nearness to the stationary point. These pure accelerators tend to preserve the simplicity, convergence properties, and low computational cost of the original algorithm. However, they often rely on heuristic approximations of $dG$ (or $dG^{-1}$), or derivations that potentially ignore a large amount of crucial first-order information. 
While loosely inspired by the theory behind classical quasi-Newton methods, it can be argued that these methods do not fully and formally take advantage of the prior optimization literature.

This paper seeks to fill the methodological gap by proposing a generic accelerator for any MM algorithm map via a quasi-Newton root-finding method. Casting the problem as  root-finding leads to robustness against numerical instabilities. We build off of the wisdom in \cite{zhou2011quasi}, referring to their method as ZAL in this paper, and a few other methods (briefly discussed in Section~\ref{sec:background}) that also seek to find the root of MM residuals using QN method. Various QN methods differ from one another in the way $dG(\bx)^{-1}$ is approximated. Popular methods model this approximation as solution to constrained optimization problem where the linear constraints are provided by \textit{secant approximation} of $G$. Our method incorporates the information from the MM algorithm map to better inform the secant approximation. While ZAL minimizes the norm of the Jacobian near the fixed point, we optimize a richer objective that directly ties into the classical approach of minimizing the change in the Jacobian across iterations, furnishing a rank-two update formula for $dG^{-1}$. This simple yet effective approach guarantees MM acceleration in a more generalized setting and allows us to establish theoretical convergence guarantees. 

This paper is organised as follows: in Section~\ref{sec:background}, we present a general background of MM algorithms and existing acceleration techniques. The key contribution of this paper is formally proposing the MM acceleration algorithm and writing its proof of convergence in Section~\ref{sec:BQN}. Our standard quasi-Newton recipe demands storing the approximate Jacobian matrices in each iteration, which can be computationally ineffective for high dimensions. To address this issue, we further propose a limited-memory variant of our method amenable to high-dimensional settings. We then assess the performance of our algorithm empirically in Section~\ref{sec:examples}, followed by discussion.

\section{Background: EM, MM, and Acceleration}\label{sec:background}

MM algorithms are increasingly popular toward  solving large-scale and high-dimensional optimization problems in statistics and machine learning \citep{lange2008mm, zhou2015novel,xu2019power}.
An MM algorithm minimizes the objective function $f$ by successively minimizing a sequence of surrogate functions $g(\bx \mid \bx_k)$ which dominate the objective function $f(\bx)$ and are tangent to it at the current iterate  $\bx_k$. 
That is, they require  that $g(\bx_k \mid \bx_k) = f(\bx_k)$ and $g(\bx \mid \bx_k)  \geq f(\bx)$ for all $\bx$ at each iteration $k$. Decreasing $g(\bx \mid \bx_k)$  automatically engenders a decrease in $f(\bx)$.   The resulting update $\bx_{k+1}~=~\text{argmin}_{\bx} g(\bx \mid \bx_k)$ implies the string of inequalities $f(\bx_{k+1}) \leq g(\bx_{k+1} \mid \bx_{k}) \leq g(\bx_{k} \mid \bx_{k}) = f(\bx_{k})$
validating the descent property. 
Our method will make use of this practically useful observation.
The MM principle thus converts a hard optimization problem into a sequence of manageable subproblems, expressed as $\bx_{k+1} = F(\bx_k)$.

The iteration terminates when a chosen vector norm (usually $L_2$ norm) of differences between two consecutive iterates is small enough, i.e. $\|\Delta \bx_k\| = \|\bx_{k+1} - \bx_k\| \leq \epsilon$ for some tolerance $\epsilon > 0$. From the perspective of the algorithm map, the MM algorithm amounts to seeking the root  of $G(\bx) := F(\bx) - \bx$. This approach has paved the way for QN acceleration regimes that attempt to well-approximate the inverse of the Jacobian of $G$ at $\bx_k$; see \cite{luenberger1984linear, dennis1996numerical} for a more detailed discussion. Let $dG(\bx)$ be the differential of $G$ evaluated at $\bx$, then $dG(\bx) = (dF(\bx) - I_p)$ where $I_p$ is the $p \times p$ identity matrix. Denoting the approximation to $dG(\bx_k)^{-1}$ by $H_k$, the QN update of $\bx_k$ is given by
\begin{equation}\label{eq:QN_update}
	\bx_{k+1} = \bx_k - H_k G(\bx_k)\,.  
\end{equation}
A QN method is uniquely defined by the way it approximates $dG(\bx_k)^{-1}$. The thread tying different QN methods is the \textit{secant condition}, which states that $H_k$ is the exact inverse Jacobian of a linear function joining $(\bx_k, G(\bx_k))$ and some other point of choice, say $(\by, G(\by))$. That is, the secant constraint mandates that $H_k$ satisfies
\begin{equation}\label{eq:sec}
	\bx_k - \by = H_k (G(\bx_k) - G(\by))\,.
\end{equation}
In the classical Broyden method, $\by$ is taken to be $\bx_{k-1}$. 
For $\bx  \in \mathbb{R}^p$,  $H_k$ is a $p \times p$ matrix and the secant constraint fixes $p$ degrees of freedom. The remaining $p^2 - p$ degrees entail that Eq.\eqref{eq:sec} is underdetermined, satisfied by infinitely many solutions $H_k$. At this juncture, deriving QN methods proceeds by specifying an additional criterion to admit a well-defined procedure. We now survey various popular approaches along this line of thought.

\subsection{Existing MM Acceleration Schemes} 


Perhaps the most transparent and well-studied QN acceleration scheme was proposed by \cite{jamshidian1997acceleration}. Their method directly applies the QN method for root finding by \cite{broyden1965class} to update $H_k$ at any time point $k$. 
Contributions since have noted that this dense matrix update becomes computationally prohibitive in high dimensions typical of contemporary data. The STEM method by \cite{varadhan2008simple} instead provides a simpler approximation of $H_k$ as only a scalar multiple of the identity matrix. Assuming $H_k = \alpha_k I_p$, three variants of STEM entail slightly different inverse Jacobian approximations under 
\begin{equation} \label{eq:squarem_steplengths}
	\alpha_k^{(1)} = \dfrac{\bu_k^T \bv_k}{\bv_k^T \bv_k}, \qquad\qquad \alpha_k^{(2)} = \dfrac{\bu_k^T\bu_k}{\bu_k^T \bv_k}, \qquad\qquad \alpha_k^{(3)} = -\dfrac{ \|\bu_k\|}{\|\bv_k\|}\,,
\end{equation}
where 
$$\bu_k = F(\bx_k) - \bx_k, \quad \text{and} \quad \bv_k \, = \, G(F(\bx_k)) - G(\bx_k) \, = \,F^2(\bx_k) - 2F(\bx_k) + \bx_k.$$ The scalars $\alpha_k$ in \eqref{eq:squarem_steplengths} can be understood as various steplengths for each update rule. An extension to STEM known as SQUAREM was later proposed by the same authors \citep{varadhan2008simple}, using the idea of a ``squared" Cauchy method which may outperform traditional Cauchy methods. 
While SQUAREM outperforms many acceleration methods and is chiefly regarded for its simplicity, the loss of information due to the identity matrix approximation can remain severe, especially in high dimensional cases. 

More recently, \cite{zhou2011quasi} propose an effective acceleration scheme which we will refer to as ZAL. It enjoys the same computational complexity as SQUAREM by avoiding matrix approximation of $dG(\bx_k)^{-1}$ in Eq.\eqref{eq:QN_update} at each step, with update rule
\[
\bx_{k+1} = \bx_k - \left(I_p - \dfrac{\bv_k \bu_k^T}{\bu_k^T \bu_k} \right)^{-1} G(\bx_k) = (1 - c_k)F(\bx_k) + c_k F^2(\bx_k)
\]
where $c_k = \bu_k^T\bu_k/\bu_k^T\bv_k$ and the differences $\bu_k, 
\bv_k$ are as defined earlier. It is worth mentioning the secant constraint used in ZAL, as we will motivate a similar constraint to define the endpoints for our method in the next section. Let $M := dF(\bx^\ast)$. ZAL assumes that $\bx_k$ is close to the optimal point $\bx^\ast$ so that the following linear approximation is reasonable: 
\begin{equation} \label{eq:ZALsecant}
	F \circ F(\bx_k) - F(\bx_k) \approx M(F(\bx_k) - \bx_k)\,.   
\end{equation} 
At each step, ZAL attempts to better approximate $M$ using new information from QN updates in the iteration defined by Eq.~\eqref{eq:QN_update}. 
As these methods operate only with reference to the algorithm map, largely ignoring the objective function to be minimized, they will serve as comparisons for our proposed method. In the following section, we begin by presenting our secant approximation and examine its merits within  Broyden's quasi-Newton paradigm on a univariate illustrative example. 

\section{A novel Broyden quasi-Newton method} \label{sec:BQN}

Recall the QN secant approximation given in Equation~\eqref{eq:sec}. Motivated by \cite{zhou2011quasi}, using the MM update $F(\bx_k)$ as our choice of $\by$ gives the following secant constraint
\begin{equation} \label{eq:bqn_secant_condition}
    F(\bx_k) - \bx_k = H_k(G(F(\bx_k)) - G(\bx_k))\,.
\end{equation}
This serves as our point of departure for deriving our proposed method. Before presenting the details, we begin with an illustrative example that highlights the contrast between the secant condition \eqref{eq:bqn_secant_condition} and  Broyden's standard method.

\paragraph{Illustrative Example.}
We begin by accelerating a classic MM example of minimizing the cosine function  $f(x) = \cos(x)$ to find the root of MM residual.  
To derive a surrogate, consider the following quadratic expansion about $y \in \mathbb{R}$:
\begin{align*} 
	\cos(x) \,& =\, \cos(y)-  \sin(y)(x -y)- \frac{1}{2}\cos(z)(x-y)^2 \\ \,&\, \leq  \cos(y) - \sin(y)(x - y) + \frac{1}{2}(x-y)^2
	\,\, :=\,\, g(x \mid y), 
\end{align*}
where $z$ lies between $x,y$ and the inequality follows since $|\cos(z)|\leq 1$
\citep{lange2016mm}. It is straightforward to minimize $g$ and obtain the nonlinear MM update formula
$x_{k+1} = F(x_k) = x_k + \sin(x_k)\,$ where the interest is in finding the root of $G(x) \,=\, F(x) - x \,=\, \sin(x)$. 

Figure~\ref{fig:secants} presents two consecutive steps of the QN method for finding root of $G$ in one scenario after two initial iterations labeled  $A$ and $B$.. Each plot shows the updates resulting from both secant approximations. We use capital letters to denote a point on the Cartesian plane, and lowercase to denote its corresponding $x$-coordinate. In the left figure, note $a$ and $b$ lie on opposite sides of the root $x^\ast = \pi$, marked by the vertical dashed line. 

\begin{figure}
	\centering
	\includegraphics[width = .85\textwidth]{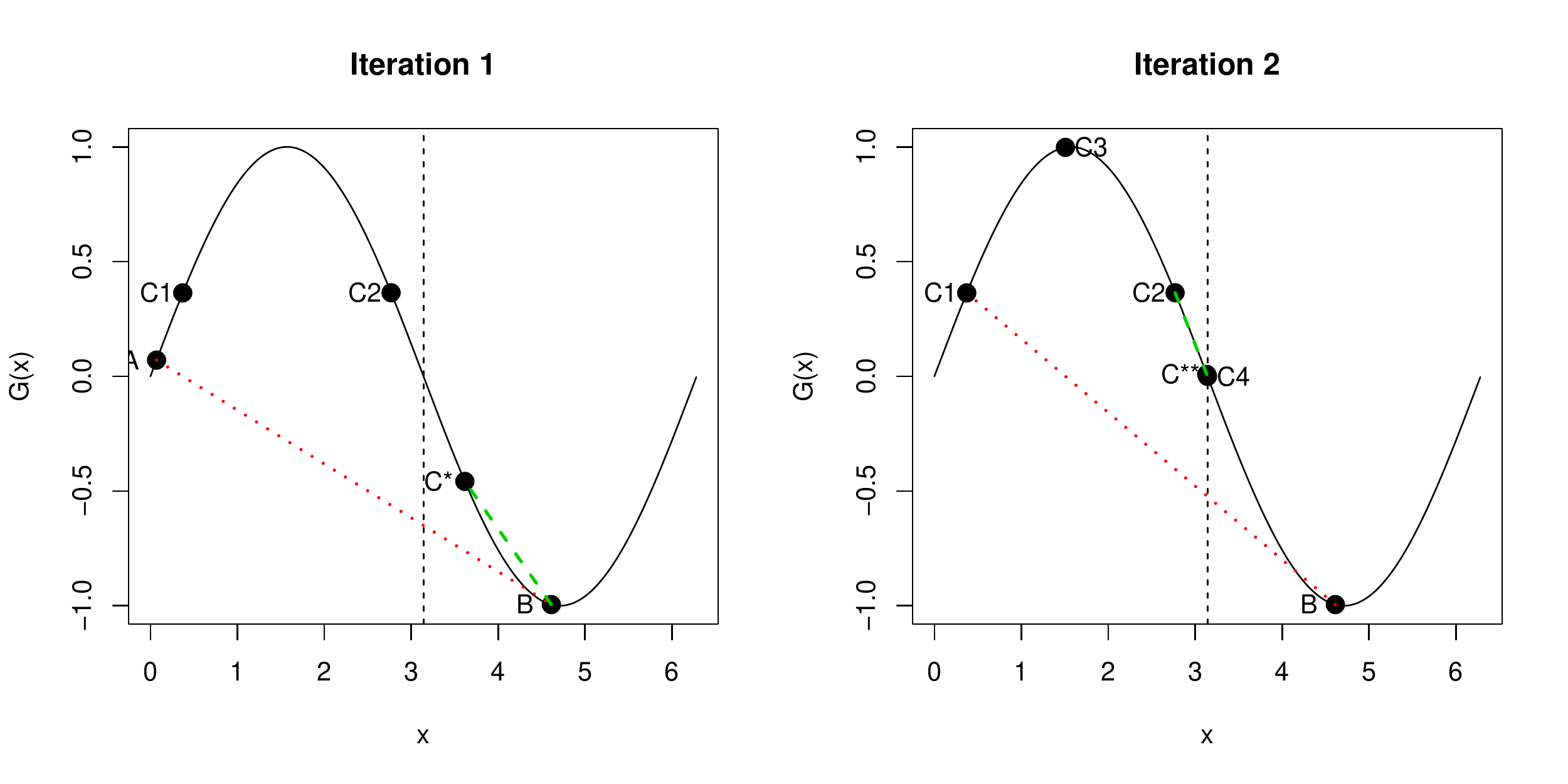}
	\caption{Comparison of secant approximations for two consecutive iterations.}
	\label{fig:secants}
\end{figure}

Now let $c^\ast = F(b)$ be the unaccelerated MM update from $b$.  Rather than moving to $C^\ast$, the standard Broyden iteration approximates the  search direction as the slope of the line joining A and B (dotted red line). Instead, our proposed update derived in the next section employs the line joining $B$ and $C^*$ as the search direction (dashed green line). As a result, Broyden's method produces $C_1$ as the next iterate, and our modified update leads to $C_2$. Specifically, the updates are given by
\[
c_1 = b - G(b)\dfrac{b - a}{G(b) - G(a)} \qquad \text{and} \qquad c_2 = b - G(b)\dfrac{c^* - b}{G(c^*) - G(b)}\,.
\]

The difference in quality of the approximations is revealed upon considering the next step as displayed in the right panel. Let $c^{**} = F(c_2)$, and denote the updates given by standard Broyden's method and our method as $C_3$ and $C_4$, respectively. Not only is it visually evident that standard Broyden iteration is far from the optimum, but our modified Broyden with extrapolation method converges at $C_4$ under an absolute tolerance criterion of $10^{-7}$. 

Even in a univariate setting, where $H_k$ can be completely determined from the secant condition, the advantage provided by our secant approximation is clear when the current state does not render a good search direction.  Because the secants drawn in  standard Broyden's method rely only linearly on the current and previous state, a bad quasi-Newton step propagates to a bad secant approximation, straying far from the fixed point. Our proposed method avoids this by drawing a secant that incorporates information at the current state together with an extrapolation from the next MM step. This additional information  can act as a correction when the original algorithm produces a poor update such as $b$ in the example above.

\begin{table}[h]
	\centering
	\small
	{\begin{tabular}{|c | c| c| c| c| c|}
			\hline
			Method & Minimum & I Quantile & II Quantile & III Quantile & Maximum\\
			\hline
			QN & 2 & 4 & 5 & 6 & 1389\\
			BQN & 1 & 2 & 3 & 3 & 10\\
			\hline
	\end{tabular}}
\caption{Summary of number of iterations until convergence for both methods}
\label{tab:demonstrative}
\end{table}

To showcase the advantage in the illustrative example over a large range of starting points, we run the two QN methods for $1000$ replications, starting from randomly generated points between $0$ and $2\pi$. Table~\ref{tab:demonstrative} gives the summary of the number of iterations until convergence for both methods.

 While this example would have been trivial to optimize directly, it illustrates an advantage that tends to become more pronounced in higher dimensions where the added directional information we harness from the MM extrapolation is richer. Next, we derive our QN acceleration method using the general form of this secant approximation where $p \geq 2$.

\paragraph{Deriving the proposed method.}
Recall that we are interested in finding the root of $G(\bx)$ numerically using QN method. Using the differences $\bu_k, \bv_k$ as introduced in the previous section, the secant condition in Eq.\eqref{eq:bqn_secant_condition} can be expressed as $H_{k}\bv_k = \bu_k$. Note that one may impose several secant approximations $H_k (\bv_k^i) = \bu_k^i$ for $i \in \{1, ..., q\}$ for any $q < p$. These can be generated at the current iterate $\bx_k$ and previous $(q-1)$ iterates, and may yield better performance at the cost of extra computation. To this end, let $U_k = (\bu_1\ \dots \bu_q)$ and $V_k = (\bv_1 \; \bv_2 \dots \bv_q)$ be two $\ p\times q$ matrices; the corresponding linear constraint for $H_k$ in the multiple secant conditions case is $H_k V_k = U_k$.
	
The $p \times p$ inverse Jacobian matrix $H_k$ has $p^2$ degrees of freedom, of which $pq$ degrees of freedom are fixed by the secant approximation. To derive a well-defined update, one must choose how to fix the remaining $p^2 - pq$ degrees of freedom. We follow classical intuitions that yield a connection to Broyden's method for finding roots of nonlinear functions.  
The idea behind this and several of the most successful quasi-Newton methods seeks the smallest perturbation to $H_{k-1}$ when updating to $H_k$, which can also be viewed as imposing a degree of smoothness in the sequence of iterates. The resulting optimization problem can be formulated as
\begin{align} \label{eq:minimization}
	\text{Minimize} &: \|H_{k} - H_{k-1}\|_F\,  \nonumber\\
	\text{subject to} &: H_{k}V_k = U_k\;,
\end{align} 
where $\|\cdot\|_F$ denotes the Frobenius norm. We now take  partial derivatives of the Lagrangian
\[
\mathcal{L} = \dfrac{1}{2}\|H_{k} - H_{k-1}\|^2_F + \Lambda^T(H_{k}V_k - U_k)
\]
with respect to $h_{k}^{ij}$ and set to ${0}$. Here $h_{k}^{ij}$ denotes the $ij^{th}$ element of the matrix $H_{k}$. As a consequence, we obtain the Lagrange multiplier equation
\[
0 = h_{k}^{ij} - h_{k-1}^{ij} + \sum_{k=1}^{p}\lambda_{ik}v_{jk},
\]
which can be expressed in matrix form as
\begin{equation} \label{eq:legrange_eq1}
	H_{k} - H_{k-1} + \Lambda V_k^T = \mathbf{0}   .
\end{equation}
Right-multiplying Eq.\eqref{eq:legrange_eq1} by $V_k$ and imposing the constraint from Eq.\eqref{eq:bqn_secant_condition} gives the solution for $\Lambda$ as
\[
\Lambda = (H_{k-1} V_k - U_k)(V_k^TV_k)^{-1}\,.
\]
Therefore, 
\begin{equation}
	H_{k} = H_{k-1}\left(I_p - V_k (V_k^T V_k)^{-1}V_k^T\right) + U_k(V_k^T V_k)^{-1}V_k^T.
\end{equation}
We remark that as the problem dimension increases, a larger choice of $q$ fixes more information and may improve acceleration, but also risks numerical singularity for the matrix $V_k^T V_k$. We draw attention to the special case of $q=1$ where 
\begin{equation} \label{eq:BFGS_update}
	H_{k} = H_{k-1} - H_{k-1}\dfrac{\bv_k\bv_k^T}{\bv_k^T\bv_k} + \dfrac{\bu_k\bv_k^T}{\bv_k^T\bv_k}\,.
\end{equation}
We see \eqref{eq:BFGS_update} can be written as $H_{k} = H_{k-1} + A_k + B_k$ where both $A_k$ and $B_k$ are rank-1 matrices, yielding a rank-2 update as expected. Also note that the symmetry condition on $H_k$ assumed in classical Broyden-Fletcher-Goldfarb-Shanno (BFGS) updates for minimization is \textit{not} necessary, as here we are approximating an inverse Jacobian  rather than a Hessian/inverse Hessian matrix.

The search direction $\bp_k$ at iteration $k$ is given by $\bp_k = -H_{k}G(\bx_k)$, with a corrected update formula $\bx_{k+1} = \bx_k + \gamma_k \bp_k$, where $\gamma_k = \omega_k /\|\bp_k\|$ is an appropriate scaling factor in the search direction. Here $\omega_k$ is the steplength and $\|\cdot\|$ denotes the $L_2$ vector norm. The corresponding steplength for the unaccelerated MM algorithm is $\|F(\bx_k) - \bx_k\| = \|\bu_k\|$, and for a SQUAREM algorithm is $|\alpha_k^{(i)}| \|\bu_k\|$ for $i \in \{1,2,3\}$. We choose the steplength $|\alpha_k^{(3)}| \|\bu_k\| = \|\bu_k\|^2/\|\bv_k\|$ from \eqref{eq:squarem_steplengths} for our experiments in this paper due to its intuitive explanation \citep{varadhan2008simple}. 
While the behavior of each of the three variants of SQUAREM varies widely, we will see that our method in contrast performs consistently well in a range of scenarios fixing  $\omega_k =\|\bu_k\|^2/\|\bv_k\|$.


\paragraph{Intuition and relation to existing methods.} 

ZAL and SQUAREM are perhaps the most widely used quasi-Newton acceleration methods for MM algorithms. The point of departure in these methods is to cast acceleration as seeking a zero of $G$. 
We ground our approach in the wisdom behind Broyden's root-finding method, and improve upon it by using the secant approximation \eqref{eq:bqn_secant_condition}. As illustrated in the demonstrative example, the benefits of this approximation are twofold. By the descent property of the MM map, $F(\bx_k) - \bx_k$ gives a more reliable direction to move along, 
especially when $\bx_k$ was a  poor update from $\bx_{k-1}$. Second, instead of only one constraint, the MM map enables us to impose multiple linear constraints that become increasingly accurate as iterates $\bx_k$ approaches $\bx^\ast$. 

The STEM and SQUAREM methods employ scalar multiples of the identity as approximations to the Jacobian matrix, which can ignore much valuable curvature information compared to a dense approximation. 
Unlike traditional root finders for non-linear functionals \citep{broyden1965class, pearson1969variable}, their convergence properties are not as rigorously established. The ZAL method makes an assumption that $\bx_k$ is close to the stationary point $\bx^\ast$, validating the linear approximation in Eq.\eqref{eq:ZALsecant}. If $M_k$ denotes the approximation to $dF(\bx^\ast)$ at step $k$, then using the principle of parsimony, the objective in ZAL seeks to minimize $\|M_k\|_F$  subject to the constraint $V_k = (M_k - I_p)U_k$. This criterion yields a computationally elegant update, but unlike Eq.\eqref{eq:minimization} for BQN, effects a disconnect from the theory and intuition behind quasi-Newton methods based on minimally perturbing $H_k$. It is unclear 
how convergence is affected when initiated far from a stationary point where the linear approximation is not reasonably valid. It is our understanding that this approach may fail or converge slowly in such cases, 
since penalizing $M_k$ discourages large steps even when the current estimate is far from stationarity. 

It can be argued that 
a chief advantage of these prior methods is their computational simplicity. In particular, they are quite scalable to high-dimensional problems as their space complexity only grows linearly in the number of variables.
In contrast, while bringing us closer to established optimization theory, our method produces Jacobians that may become computationally unwieldy as the number of variables grows. To ameliorate this,  we next propose a low-memory variant  based on the ideas in limited-memory BFGS. 

\subsection{A limited memory variant for high-dimensional settings} 
Examining Eq.\eqref{eq:BFGS_update} reveals that our algorithm requires the $p \times p$ matrix $H_{k}$ to perform a rank-two update at each step, which can be computationally prohibitive in high dimensions. Additionally, storing the full $p \times p$ matrix at each step can be very challenging. Fortunately, many limited memory variants of  quasi-Newton algorithms have been proposed \citep{shanno1978conjugate,nocedal1980updating,griewank1982partitioned}, and rooting our method in Broyden's  framework allows us to immediately import these ideas. 

We will construct the limited memory version of our algorithm denoted by L-BQN by analogy to the way BFGS algorithm is made scalable using L-BFGS \citep{liu1989limited}. BFGS \citep{fletcher2013practical} is a quasi-Newton optimization method that stores an approximation of the inverse Hessian matrix of the objective function at each iteration. For computationally challenging high-dimensional cases, L-BFGS surpasses this problem by instead storing only a few vectors that represent the inverse Hessian approximation implicitly. Likewise, we will also store a pre-defined $m$ number of vectors that will approximate the inverse Jacobian at each step. Recall that our update is given by $\bx_{k+1} = \bx_k - H_k G(\bx_k)$, where $H_{k}$ is updated by the formula
\[
H_{k+1} \quad=\quad H_{k}\left(I_p - \dfrac{\bv_k \bv_k^T}{\bv_k^T \bv_k}\right) + \dfrac{\bu_k \bv_k^T}{\bv_k^T \bv_k} \quad=\quad H_k W_k +\dfrac{\bu_k \bv_k^T}{\bv_k^T \bv_k},
\]
where $W_k = \left( I- \bv_k \bv_k^T/\bv_k^T \bv_k\right)$. Akin to the L-BFGS method, we may store $m$ previous pairs of $\{\bu_i, \bv_i\}$, $i = k-1, \dots , k-m$, where $m$ typically is chosen between $3$ and $20$. The matrix product required at each step $H_k G(\bx_k)$ can be obtained by performing a sequence of inner products and vector summations involving only $G(\bx_k)$ and the pairs $\{\bu_i, \bv_i\}$, $i = k, \dots, k-m$. After the new iterate is computed, the oldest pair $\{\bu_{(k-m)}, \bv_{(k-m)}\}$ is dropped and replaced by the pair $\{\bu_{k+1}, \bv_{k+1}\}$ obtained from the current step. 

A limited memory variant proceeds by recursion  at each iteration. At the  $k^{th}$ step, an initial estimate of the inverse Jacobian is taken to be a scalar multiple of identity matrix  $H_k^0= \nu_kI_p$. The scale factor $\nu_k$  attempts to capture the size of the true inverse Jacobian matrix along the most recent search direction. 
Next,  $H_k^0$ is updated $(m+1)$ times via Eq.\eqref{eq:BFGS_update} in a nested manner to obtain the relation
\begin{align*}
	H_{k} \; &= \; H_k^0 (W_{k-m} \ldots W_{k}) + \dfrac{\bu_{k-m} \bv_{k-m}^T}{\bv_{k-m}^T \bv_{k-m}}(W_{k-m+1} \ldots W_{k})\\
	\; & \quad + \; \dfrac{\bu_{k-m+1} \bv_{k-m+1}^T}{\bv_{k-m+1}^T \bv_{k-m+1}} (W_{k-m+2} \ldots W_{k}) + \; \ldots
	+ \; \dfrac{\bu_{k} \bv_{k}^T}{\bv_{k}^T \bv_{k }}\,.
\end{align*}
Details on obtaining the nested formula above can be found in Chapter 6 of \cite{nocedal2006numerical}. There the authors suggest that an effective choice for the scaling factor is given by $ \nu_k = {\bu_{k}^T \bv_{k}}/{\bv_{k}^T \bv_{k}}.$
Through this choice, our L-BQN algorithm can be understood as a generalization of the STEM method \citep{varadhan2008simple}: STEM corresponds to the special case where $m=0$. However,  the approximate inverse Jacobian  $\nu_k I_p$ for STEM is derived by minimizing the distance between the zeros of two linear secant-like approximations for $G(\bx)$--- one centered around $\bx_k$, and another at $F(\bx_k)$. While the approaches that lead to this approximation are quite different, it accords more confidence in L-BQN as for non-zero $m$, the inverse Jacobian approximation is made \textit{more} robust by leveraging curvature information from the last $m$ iterates. 

\subsection{Convergence}
We now analyze the convergence properties of the proposed method.
The two essential components 
are \, 1) convergence of the base MM algorithm to the stationary point $\bx^\ast$, and \, 2) convergence of Broyden's root finding quasi-Newton method to the stationary point $\bx^\ast$ of the map $G$. Our study bridges careful analyses of these two facets. 

Naturally, establishing convergence guarantees for our proposed acceleration scheme  rests on the convergence of the underlying MM map, which typically exhibits a locally linear rate of convergence \citep{lange2016mm}. We will assume the base algorithm to be locally convergent in a neighborhood $S$ of $\bx^\ast$ with  rate of convergence denoted by $\tau > 0$. In this section, we prove that BQN is also locally convergent to $\bx^\ast$ in a subset of this neighborhood, and further identify conditions that establish its convergence rate. 
Recall  $\{\bx_k\}$ converges to $\bx^\ast$ at a \textit{linear} rate if, for some chosen vector norm $\|\cdot\|$,
\[
\dfrac{\|\bx_{k+1} - \bx^\ast\|}{\|\bx_k - \bx^\ast\|} \leq r
\]
for some rate of convergence $r \in (0,1)$. The convergence rate is \textit{superlinear} if 
\[
\dfrac{\|\bx_{k+1} - \bx^\ast\|}{\|\bx_k - \bx^\ast\|} \to 0 \qquad\text{ as } k \to \infty\,.
\]

A seminal work of \cite{broyden1973local} derives  local linear and $Q$-superlinear convergence results for several single and double rank quasi-Newton root finding methods. 
Our approach stands close to Broyden's second method, 
while the improved secant approximation through  MM extrapolation will be incorporated into the analysis. We assume that $G$ is differentiable in a neighborhood of $\bx^\ast$, in that the Jacobian matrix $dG(\bx^\ast)$ exists and is non-singular. At many instances, we will treat $(\bx, dG(\bx)^{-1})$ as a tuple whose individual components are updated via Eq.\eqref{eq:QN_update} and \eqref{eq:BFGS_update}. It is crucial to prove that the update function in Eq.\eqref{eq:QN_update} is well defined in some neighborhood of the limit point $(\bx^\ast, dG(\bx^\ast)^{-1})$. To this end, we first prove by induction local convergence of our algorithm under certain conditions. We then carefully construct a neighborhood of $(\bx^\ast, dG(\bx^\ast)^{-1})$ to satisfy these conditions explicitly. 

To ease notation, our current iterate is denoted by $(\bx,H)$ in a neighborhood of $(\bx^\ast, dG(\bx^\ast)^{-1})$. We use $\bar{\bx}$ to denote the update on $\bx$ given by Eq.\eqref{eq:QN_update}, $\bar{H}$ to denote the update on $H$ from Eq.\eqref{eq:BFGS_update}, and introduce further notations: 
$$\bs = \bar{\bx} - \bx,\, \quad \by = G(\bar{\bx}) - G(\bx), \quad  \bu = F(\bx) - \bx, \quad \text{and} \quad \bv = G(F(\bx)) - G(\bx). $$
In the subsequent discussion, suppose $\|\cdot\|$ denotes a chosen vector norm on $\mathbb{R}^p$, then for a $p \times p$ matrix $A$, $\|A\|$ denotes the corresponding induced operator norm. The lemma below supplies useful inequalities to be applied in proving the main theorem.

\begin{lemma} \label{lemma:lipchitz}
	Assume $G: \mathbb{R}^p \to \mathbb{R}^p$ is differentiable in the open convex set $D$, and suppose that for some $\widehat{\bx}$ in $D$ and $d > 0$,
	\begin{equation} \label{eq:lipchitz}
		\|dG(\bx) - dG(\widehat{\bx})\| \leq K \|\bx-\widehat{\bx}\|^d\,,
	\end{equation}
	where $K \in \mathbb{R}$ is a constant. Assuming $dG(\widehat{\bx})$ is invertible, we have for each $\by, \bz $ in $D$,
	\begin{align}
		\|G(\by) - G(\bz) - dG(\widehat{\bx})(\by-\bz)\| & \leq K \max\{\|\by - \widehat{\bx}\|^d, \|\bz - \widehat{\bx}\|^d\}\|\by-\bz\| \label{eq:ineq1}  \nonumber \\
		\|dG(\widehat{\bx})^{-1}(G(\by) - G(\bz)) - (\by - \bz)\| &\leq K \|dG(\widehat{\bx})^{-1}\| \max\{\|\by - \widehat{\bx}\|^d, \|\bz - \widehat{\bx}\|^d\} \|\by-\bz\|\,.
	\end{align}
	Moreover, there exists $\epsilon >0 \text{ and } \rho >0$ such that 
	\[ \max\{\|\by - \widehat{\bx}\|^d, \|\bz - \widehat{\bx}\|^d\} < \epsilon\] 
	implies that $\by \text{ and } \bz$ belong to $D$, and
	\begin{equation} \label{eq:ineq2}
		(1/\rho)\|\by-\bz\| \leq \|G(\by) - G(\bz)\| \leq \rho\|\by-\bz\|\,.
	\end{equation}
\end{lemma}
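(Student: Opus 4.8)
The plan is to establish the two displayed gradient inequalities via the integral form of the mean value theorem, and then deduce the two-sided bound \eqref{eq:ineq2} by a perturbation argument that absorbs the residual error once $\epsilon$ is taken small enough.

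First I would prove \eqref{eq:ineq1}. Since $D$ is convex and $G$ is differentiable, the exact integral representation
\[
G(\by) - G(\bz) = \int_0^1 dG(\bz + t(\by - \bz))(\by - \bz)\, dt
\]
holds. Subtracting $dG(\widehat{\bx})(\by - \bz) = \int_0^1 dG(\widehat{\bx})(\by-\bz)\,dt$ and taking norms gives
\[
\|G(\by) - G(\bz) - dG(\widehat{\bx})(\by - \bz)\| \le \|\by - \bz\|\int_0^1 \|dG(\bz + t(\by - \bz)) - dG(\widehat{\bx})\|\,dt.
\]
The H\"older condition \eqref{eq:lipchitz} bounds each integrand by $K\|\bz + t(\by-\bz) - \widehat{\bx}\|^d$. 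Writing the argument as the convex combination $(1-t)(\bz - \widehat{\bx}) + t(\by - \widehat{\bx})$ and using the triangle inequality together with monotonicity of $s \mapsto s^d$ on $[0,\infty)$, I bound this uniformly in $t$ by $\max\{\|\by - \widehat{\bx}\|^d, \|\bz - \widehat{\bx}\|^d\}$, which yields \eqref{eq:ineq1}. The second inequality is then immediate: the left-hand side equals $dG(\widehat{\bx})^{-1}\big(G(\by) - G(\bz) - dG(\widehat{\bx})(\by - \bz)\big)$, so submultiplicativity of the induced operator norm reduces it to \eqref{eq:ineq1} multiplied by $\|dG(\widehat{\bx})^{-1}\|$.

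For \eqref{eq:ineq2}, I would first fix a radius $r > 0$ with the open ball $B(\widehat{\bx}, r) \subseteq D$ (possible since $D$ is open), so that imposing $\epsilon \le r^d$ forces $\by, \bz \in D$ whenever the max condition holds. The upper bound follows from $\|G(\by)-G(\bz)\| \le \|G(\by)-G(\bz)-dG(\widehat{\bx})(\by-\bz)\| + \|dG(\widehat{\bx})\|\,\|\by-\bz\|$ combined with \eqref{eq:ineq1}, giving a prefactor $K\epsilon + \|dG(\widehat{\bx})\|$. The lower bound is the delicate step: starting from
\[
\|\by - \bz\| \le \|dG(\widehat{\bx})^{-1}\|\,\|G(\by)-G(\bz)\| + \|dG(\widehat{\bx})^{-1}(G(\by)-G(\bz)) - (\by-\bz)\|
\]
and applying the second inequality, I obtain $(1 - K\|dG(\widehat{\bx})^{-1}\|\epsilon)\|\by-\bz\| \le \|dG(\widehat{\bx})^{-1}\|\,\|G(\by)-G(\bz)\|$. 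Choosing $\epsilon$ small enough that $K\|dG(\widehat{\bx})^{-1}\|\epsilon \le 1/2$ keeps the prefactor positive and bounded away from zero. Setting $\rho$ equal to the maximum of $K\epsilon + \|dG(\widehat{\bx})\|$ and $\|dG(\widehat{\bx})^{-1}\|/(1 - K\|dG(\widehat{\bx})^{-1}\|\epsilon)$ then closes both bounds simultaneously.

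The main obstacle is reconciling all constraints on $\epsilon$ at once: it must be small enough to absorb the residual term in the lower bound (keeping $1 - K\|dG(\widehat{\bx})^{-1}\|\epsilon > 0$) while also forcing $\by, \bz$ into $D$. Taking $\epsilon = \min\{r^d,\, 1/(2K\|dG(\widehat{\bx})^{-1}\|)\}$ resolves this cleanly, after which the remaining arguments are routine invocations of the triangle inequality and operator-norm submultiplicativity.
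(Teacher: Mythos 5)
Your proof is correct and follows essentially the same route the paper takes: the paper simply asserts that the inequalities in \eqref{eq:ineq1} follow from the standard Taylor/mean-value argument of Ortega and Rheinboldt and that \eqref{eq:ineq2} follows from continuity and non-singularity of $dG$ at $\widehat{\bx}$, and your write-up is precisely that standard argument carried out in full (segment integral representation plus the convex-combination bound for the H\"older term, then a Neumann-style perturbation to get the two-sided bound with $\epsilon \le \min\{r^d, 1/(2K\|dG(\widehat{\bx})^{-1}\|)\}$). No gaps.
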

Inequalities (\ref{eq:ineq1}) follow from standard arguments using Taylor's expansion \citep{ortega2000iterative}, while inequality (\ref{eq:ineq2}) is an immediate consequence of continuity and non-singularity of $dG$ at $\widehat{\bx}$. In the subsequent analysis, we will use a matrix norm $\|\cdot\|_M$, not related to the vector norm $\|\cdot\|$ described earlier. Here, $\|A\|_M := \|MAM\|_F$ where $M$ is a matrix and $\|\cdot\|_F$ is the Frobenius norm. However, there is a constant $\eta > 0$ such that $\|A\| \leq \eta \|A\|_M$ by the equivalence of norms in finite-dimensional vector spaces. 

We now derive general sufficient conditions for local convergence in the spirit of a classic result  by \cite{broyden1973local}. Since we require the inverse of $dG$, we posit the following conditions before proving convergence, with $S \text{ and } D$ as defined earlier.

\begin{ass} \label{ass1}
	(A1) Let the function $G: \mathbb{R}^p \to \mathbb{R}^p$ be differentiable in the open convex set $D$ containing $\bx^\ast$ such that $G(\bx^\ast)=0$ and $dG(\bx^\ast)$ is non-singular. Assume that for some $d > 0$, $G$ satisfies Inequality~\eqref{eq:lipchitz} inside $D$.
\end{ass}

\begin{ass} \label{ass2}
	(A2) 
	Let the update function in Eq.\eqref{eq:QN_update} be well-defined in a neighborhood $N1$ of $\bx^\ast$ where $N_1 \subset D \cap S$, and inverse Jacobian update from Eq.\eqref{eq:BFGS_update} be well-defined in a neighborhood $N_2$ of $dG(\bx^\ast)^{-1}$ containing non-singular matrices. Assume that there are non-negative constants $\alpha_1 \text{ and } \alpha_2$ such that for each tuple $(\bx,H)$ in $N1 \times N2$, the following is satisfied,
	\begin{align} 
		\|\bar{H} - dG(\bx^\ast)^{-1}\|_M &\leq \left[1 + \alpha_1 \max\left\{\|F(\bx) - \bx^\ast\|^d, \|\bx - \bx^\ast\|^d\right\}\right]\|H - dG(\bx^\ast)^{-1}\|_M \nonumber \\
		& \quad + \alpha_2 \max\left\{\|F(\bx) - \bx^\ast\|^d, \|\bx-\bx^\ast\|^d\right\} \label{eq:jacobian_error}\,.
	\end{align}
\end{ass}
The first assumption warrants the application of Lemma~\ref{lemma:lipchitz} on $G$, and the second assumption lends a key error bound on the inverse Jacobian estimation. The notion of well-defined used in Assumption~\ref{ass2} will be qualified for BQN later in Theorem~\ref{th:qnm_convergence}. 

\begin{theorem} \label{th:convergence}
	Let A\ref{ass1} hold true for the function $G$ and A\ref{ass2} be satisfied for some neighborhoods $N_1$ and $N_2$ and non-negative constants $\alpha_1 \text{ and } \alpha_2$. Then for each $r \in (0,1)$ there exist positive constants $\epsilon(r) \text{ and } \delta(r)$ such that   the sequence with $\bx_{k+1} = \bx_k - H_kG(\bx_k)$ is well-defined and converges to $\bx^\ast$ whenever $\|\bx_0 - \bx^\ast\| < \epsilon(r)$ and $\|H_0 - dG(\bx^\ast)^{-1}\|_M < \delta(r)$. Furthermore,
	\[
	\|\bx_{k+1} - \bx^\ast\| \leq r\|\bx_k - \bx^\ast\| \qquad \text{ for each } k \geq 0,
	\]
	and the sequences $\{\|H_k\|\}$ and $\{\|H_{k}^{-1}\|\}$ are uniformly bounded.
\end{theorem}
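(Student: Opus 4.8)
The plan is to follow the classical local-convergence argument of Broyden, Dennis and Moré, adapting it to the secant structure encoded in Assumption A\ref{ass2}. Write $H^\ast := dG(\bx^\ast)^{-1}$ and $J := dG(\bx^\ast)$. Assumption A\ref{ass1} warrants applying Lemma~\ref{lemma:lipchitz} with $\widehat{\bx} = \bx^\ast$, and from the differentiability of $G$ the map $F = G + I_p$ is locally Lipschitz near $\bx^\ast$ with $F(\bx^\ast) = \bx^\ast$, so $\|F(\bx) - \bx^\ast\| \le L\|\bx - \bx^\ast\|$ in a neighborhood. Consequently the quantity $\theta(\bx) := \max\{\|F(\bx) - \bx^\ast\|^d, \|\bx - \bx^\ast\|^d\}$ appearing in A\ref{ass2} satisfies $\theta(\bx) \le C_0\|\bx - \bx^\ast\|^d$ with $C_0 = \max\{L^d, 1\}$. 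The argument then reduces to a coupled induction maintaining two invariants at every step $k$: the contraction estimate $\|\bx_k - \bx^\ast\| \le r^k\|\bx_0 - \bx^\ast\|$, and the deterioration bound $\|H_k - H^\ast\|_M \le 2\delta$.

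For the contraction step I would write $\bx_{k+1} - \bx^\ast = (I_p - H_kJ)(\bx_k - \bx^\ast) - H_k[G(\bx_k) - G(\bx^\ast) - J(\bx_k - \bx^\ast)]$, using $G(\bx^\ast) = 0$. Since $I_p - H_kJ = (H^\ast - H_k)J$, the first term is bounded by $\|H_k - H^\ast\|\,\|J\|\,\|\bx_k - \bx^\ast\|$, while the second is controlled by inequality \eqref{eq:ineq1} of Lemma~\ref{lemma:lipchitz}, giving $\|H_k\|K\|\bx_k - \bx^\ast\|^{1+d}$. Collecting terms, the contraction factor is $\|H_k - H^\ast\|\,\|J\| + \|H_k\|K\|\bx_k - \bx^\ast\|^d$; I would first fix $\delta$ small enough (using the norm-equivalence constant $\eta$, so $\|H_k - H^\ast\| \le 2\eta\delta$) to force the first summand below $r/2$, and then shrink $\epsilon$ so the second summand stays below $r/2$ on the ball of radius $\epsilon$, yielding $\|\bx_{k+1} - \bx^\ast\| \le r\|\bx_k - \bx^\ast\|$. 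The same smallness of $\delta$ makes $H_k$ invertible via the Banach perturbation lemma, delivering the uniform bounds $\|H_k\| \le \|H^\ast\| + 2\eta\delta$ and $\|H_k^{-1}\| \le 2\|J\|$.

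For the deterioration step I would feed the just-established geometric decay into A\ref{ass2}. Setting $\sigma_k := \|H_k - H^\ast\|_M$ and $\theta_j := \theta(\bx_j) \le C_0\epsilon^d r^{dj}$, the recursion $\sigma_{k+1} \le (1 + \alpha_1\theta_k)\sigma_k + \alpha_2\theta_k$ unrolls to $\sigma_{k+1} \le \exp(\alpha_1 S)\,\sigma_0 + \exp(\alpha_1 S)\,\alpha_2 S$, where $S := \sum_{j\ge 0}\theta_j \le C_0\epsilon^d/(1 - r^d)$ is finite and tends to $0$ as $\epsilon \to 0$. Choosing $\epsilon$ small so that $\exp(\alpha_1 S) \le 3/2$ and $\exp(\alpha_1 S)\,\alpha_2 S \le \delta/2$ gives $\sigma_{k+1} \le (3/2)\delta + \delta/2 = 2\delta$, closing the induction. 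Telescoping the contraction then yields $\|\bx_k - \bx^\ast\| \le r^k\|\bx_0 - \bx^\ast\| \to 0$, so $\bx_k \to \bx^\ast$; well-definedness of each update is inherited from A\ref{ass2} once the invariants place every $(\bx_k, H_k)$ inside $N_1 \times N_2$.

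The main obstacle is the circular dependence between the two invariants: the contraction at step $k$ needs $H_k$ near $H^\ast$, while the deterioration bound at step $k+1$ needs all earlier iterates near $\bx^\ast$ to make $\sum_j\theta_j$ summable. The resolution is the ordering within the induction step---establish the contraction (hence geometric decay up to index $k+1$) first, then apply the bounded-deterioration recursion---together with a two-stage choice of radii that fixes $\delta$ from the contraction requirement and only afterward shrinks $\epsilon$ to control both the higher-order remainder and the tail sum $S(\epsilon)$ simultaneously. Verifying that a single pair $(\epsilon(r), \delta(r))$ meets all these competing smallness constraints is the crux of the proof.
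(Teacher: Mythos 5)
Your proposal is correct and follows essentially the same route as the paper: the same decomposition $\bx_{k+1}-\bx^\ast = (I_p-H_k\,dG(\bx^\ast))(\bx_k-\bx^\ast) - H_k[G(\bx_k)-G(\bx^\ast)-dG(\bx^\ast)(\bx_k-\bx^\ast)]$ controlled by Lemma~\ref{lemma:lipchitz}, the same Banach-lemma invertibility argument, and the same coupled induction maintaining $\|\bx_k-\bx^\ast\|\le r^k\epsilon$ together with $\|H_k-dG(\bx^\ast)^{-1}\|_M\le 2\delta$. The only cosmetic difference is that you unroll the bounded-deterioration recursion multiplicatively via $\exp(\alpha_1 S)$, while the paper telescopes it additively using the inductive bound $2\delta$ (and invokes the assumed local convergence rate $\tau$ of the MM map in place of your Lipschitz bound on $F$); both yield the same choice of $(\epsilon(r),\delta(r))$.
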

A detailed proof appears is in the Appendix. 
Under Theorem 1, we inherit the following property by an identical argument of \citet{broyden1973local}, with proof omitted here.

\begin{corollary} \label{cor:superlinear_conv}
	Assume that the conditions of Theorem~\ref{th:convergence} hold. If some subsequence of $\{\|H_k - dG(\bx^\ast)^{-1}\|_M\}$ converges to zero, then $\{\bx_k\}$ converges Q-superlinearly to $\bx^\ast$.
\end{corollary}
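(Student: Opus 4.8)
The plan is to reduce the claim to showing that the \emph{entire} sequence $a_k := \|H_k - dG(\bx^\ast)^{-1}\|_M$ tends to zero, and then to upgrade the subsequential hypothesis to full-sequence convergence using the bounded-deterioration bound of Assumption~\ref{ass2}. First I would decompose the error. Starting from the update $\bx_{k+1} = \bx_k - H_k G(\bx_k)$ in Eq.\eqref{eq:QN_update} and using $G(\bx^\ast) = 0$,
\[
\bx_{k+1} - \bx^\ast = \big[(\bx_k - \bx^\ast) - dG(\bx^\ast)^{-1}(G(\bx_k) - G(\bx^\ast))\big] - (H_k - dG(\bx^\ast)^{-1})(G(\bx_k) - G(\bx^\ast)).
\]
The bracketed term is controlled by the second inequality of Lemma~\ref{lemma:lipchitz} (taking $\by = \bx_k$ and $\bz = \widehat{\bx} = \bx^\ast$), yielding a bound of order $\|\bx_k - \bx^\ast\|^{1+d}$; the remaining term is bounded by $\eta\,\rho\, a_k\,\|\bx_k - \bx^\ast\|$ after applying the norm equivalence $\|\cdot\| \le \eta\|\cdot\|_M$ and inequality~\eqref{eq:ineq2}. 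Dividing by $\|\bx_k - \bx^\ast\|$ gives
\[
\frac{\|\bx_{k+1} - \bx^\ast\|}{\|\bx_k - \bx^\ast\|} \;\le\; K\,\|dG(\bx^\ast)^{-1}\|\,\|\bx_k - \bx^\ast\|^{d} \;+\; \eta\,\rho\, a_k .
\]
Since Theorem~\ref{th:convergence} already supplies $\bx_k \to \bx^\ast$, the first term vanishes (as $d>0$), so Q-superlinear convergence follows as soon as I show $a_k \to 0$.

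The key step is to promote the subsequential limit to the full sequence. Assumption~\ref{ass2}, inequality~\eqref{eq:jacobian_error}, furnishes the recursion $a_{k+1} \le (1 + \alpha_1 c_k)\, a_k + \alpha_2 c_k$, where $c_k := \max\{\|F(\bx_k) - \bx^\ast\|^d, \|\bx_k - \bx^\ast\|^d\}$. Under Theorem~\ref{th:convergence} the iterates converge linearly, so $\|\bx_k - \bx^\ast\| \le r^k\|\bx_0 - \bx^\ast\|$; moreover $F(\bx^\ast) = \bx^\ast$ and $F = G + I$ is locally Lipschitz near $\bx^\ast$ (as $G$ is differentiable there), whence $\|F(\bx_k) - \bx^\ast\| \le L_F\|\bx_k - \bx^\ast\|$ for a constant $L_F$. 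Consequently $c_k$ decays geometrically, so both $\sum_k \alpha_1 c_k$ and $\sum_k \alpha_2 c_k$ converge. A standard lemma on recursions $a_{k+1} \le (1+\beta_k)a_k + \gamma_k$ with $\sum_k\beta_k, \sum_k\gamma_k < \infty$ — proved by normalizing $a_k$ by the convergent product $\prod_{j<k}(1+\beta_j)$ and observing that the resulting increments are summable — then shows that $a_k$ converges to a finite limit $L \ge 0$. Since some subsequence of $a_k$ tends to zero by hypothesis, the limit must satisfy $L = 0$, and therefore $a_k \to 0$.

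I expect the main obstacle to be this upgrade step rather than the error decomposition: one must verify carefully that the perturbation terms $c_k$ are summable (which hinges on both $\|\bx_k - \bx^\ast\|$ and $\|F(\bx_k) - \bx^\ast\|$ decaying geometrically, and hence on the local Lipschitz continuity of the MM map $F$ at $\bx^\ast$) and that the recursion lemma delivers genuine convergence of $a_k$, not merely boundedness. Once $a_k \to 0$ is established, substituting back into the ratio bound of the first paragraph gives $\|\bx_{k+1} - \bx^\ast\|/\|\bx_k - \bx^\ast\| \to 0$, i.e.\ Q-superlinear convergence, completing the argument. This mirrors the reasoning of \citet{broyden1973local}, the only structural change being that the secant data entering $H_k$ through Eq.\eqref{eq:BFGS_update} is generated from the MM extrapolation rather than from consecutive iterates.
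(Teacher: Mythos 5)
Your proposal is correct and follows essentially the same route as the paper, which omits the proof and defers to the ``identical argument'' of \citet{broyden1973local}: the bounded-deterioration recursion from Eq.\eqref{eq:jacobian_error} with geometrically decaying perturbations $c_k$ (guaranteed by the linear rate from Theorem~\ref{th:convergence} and the local contraction/Lipschitz property of $F$, cf.\ Eq.\eqref{eq:ineq2}) forces $\|H_k - dG(\bx^\ast)^{-1}\|_M$ to converge, so the subsequential hypothesis upgrades to $a_k \to 0$, and the error decomposition via Lemma~\ref{lemma:lipchitz} then yields the superlinear ratio bound. The only point to tighten is the recursion lemma: the normalized increments are merely bounded above by a summable sequence rather than absolutely summable, so one should pass to the nonincreasing majorant $b_k + \sum_{j\ge k}\gamma_j$ to conclude convergence, which is standard.
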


It remains to show that our acceleration algorithm satisfies the assumptions of Theorem \ref{th:convergence} and Corollary~\ref{cor:superlinear_conv}. The following result and subsequent corollary identify concrete conditions on the update functions $F \text{ and } G$ that ensure this. 

\begin{theorem} \label{th:qnm_convergence}
	Let A\ref{ass1} hold true for the function $G
	$. If 
	\begin{equation} \label{eq:ineq3}
		\dfrac{\|M\bv - M^{-1}\bv\|}{\|M^{-1}\bv\|} \leq \mu_2\|\bv\|^p,\qquad \bv \neq 0\,,
	\end{equation}
	for a constant $\mu_2 \geq 0$, non-singular and symmetric matrix $M \in \mathbb{R}^{p \times p}$, and all $(\bx, H)$ in a neighborhood $N^\prime$ of $(\bx^\ast, dG(\bx^\ast)^{-1})$, then the update functions \eqref{eq:BFGS_update} is well-defined in a neighborhood $N$ of $(\bx^\ast, dG(\bx^\ast)^{-1})$ and the corresponding iteration
	\[
	\bx_{k+1} = \bx_k - H_kG(\bx_k)
	\]
	is locally convergent to the limit point $\bx^\ast$. 
\end{theorem}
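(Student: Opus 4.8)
The plan is to verify that the BQN update satisfies the hypotheses of Theorem~\ref{th:convergence}, whereupon local convergence of $\bx_{k+1} = \bx_k - H_k G(\bx_k)$ follows immediately. Since A\ref{ass1} is assumed, the entire task reduces to establishing the bounded-deterioration bound A\ref{ass2} for the rank-two update \eqref{eq:BFGS_update}, plus well-definedness near $(\bx^\ast, dG(\bx^\ast)^{-1})$. Throughout I would write $\Phi = dG(\bx^\ast)^{-1}$ and keep $\bu = F(\bx)-\bx$, $\bv = G(F(\bx))-G(\bx)$, noting that $\bu$ is exactly the step $F(\bx)-\bx$ in $\bx$-space and $\bv$ the corresponding change in $G$, so $(\bu,\bv)$ is a genuine secant pair with endpoints $F(\bx)$ and $\bx$.

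First I would derive the error-propagation identity. Subtracting $\Phi$ from \eqref{eq:BFGS_update} and writing $\Phi = \Phi P_\bv + \Phi(I-P_\bv)$ with the orthogonal projector $P_\bv = I - \bv\bv^T/(\bv^T\bv)$ gives
\[
\bar{H} - \Phi = (H-\Phi)P_\bv + \frac{(\bu - \Phi\bv)\bv^T}{\bv^T\bv}.
\]
In the norm $\|A\|_M = \|MAM\|_F$ this splits into a noise term and a deterioration term. For the noise term, the Frobenius norm of a rank-one matrix factorizes, so $\|(\bu-\Phi\bv)\bv^T/(\bv^T\bv)\|_M = \|M(\bu-\Phi\bv)\|\,\|M\bv\|/\|\bv\|^2 \le \|M\|^2\|\bu-\Phi\bv\|/\|\bv\|$. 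Since $\bu - \Phi\bv = (F(\bx)-\bx) - \Phi(G(F(\bx))-G(\bx))$, the second inequality of Lemma~\ref{lemma:lipchitz} (with $\by=F(\bx)$, $\bz=\bx$, $\widehat{\bx}=\bx^\ast$) bounds $\|\bu-\Phi\bv\|$ by $K\|\Phi\|\max\{\|F(\bx)-\bx^\ast\|^d,\|\bx-\bx^\ast\|^d\}\|\bu\|$, and \eqref{eq:ineq2} gives $\|\bu\|\le\rho\|\bv\|$. This yields the additive A\ref{ass2} term $\alpha_2\max\{\|F(\bx)-\bx^\ast\|^d,\|\bx-\bx^\ast\|^d\}$ with $\alpha_2 = K\rho\|\Phi\|\|M\|^2$, and the exponents and the $\max$ match A\ref{ass2} verbatim precisely because the secant endpoints are $F(\bx)$ and $\bx$.

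The crux is the deterioration term $\|(H-\Phi)P_\bv\|_M$. Setting $E = M(H-\Phi)M$, I would use $\|(H-\Phi)P_\bv\|_M = \|E\widetilde{Q}\|_F \le \|E\|_F\,\|\widetilde{Q}\|_2 = \|H-\Phi\|_M\,\|\widetilde{Q}\|_2$, where $\widetilde{Q} = M^{-1}P_\bv M = I - (M^{-1}\bv)(M\bv)^T/(\bv^T\bv)$. This $\widetilde{Q}$ is an oblique projector (idempotent, as $(M\bv)^T(M^{-1}\bv)=\bv^T\bv$ by symmetry of $M$), and here is where \eqref{eq:ineq3} enters: by the standard norm formula for a rank-one oblique projector, $\|\widetilde{Q}\|_2 = \|M^{-1}\bv\|\,\|M\bv\|/(\bv^T\bv) = \sec\theta$, with $\theta$ the angle between $M^{-1}\bv$ and $M\bv$. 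Because $\sin\theta$ is controlled by $\|M\bv - M^{-1}\bv\|/\|M^{-1}\bv\| \le \mu_2\|\bv\|^p$, a short estimate gives $\|\widetilde{Q}\|_2 \le 1 + C\|\bv\|^{2p}$ in a small enough neighborhood; converting $\|\bv\| \le 2\rho\max\{\|F(\bx)-\bx^\ast\|,\|\bx-\bx^\ast\|\}$ via \eqref{eq:ineq2} and shrinking the neighborhood so this $\max$ is below one produces $\|\widetilde{Q}\|_2 \le 1 + \alpha_1\max\{\|F(\bx)-\bx^\ast\|^d,\|\bx-\bx^\ast\|^d\}$, exactly the multiplicative factor of A\ref{ass2}. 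I expect this to be the main obstacle: the orthogonal projector $P_\bv$ is \emph{not} norm-reducing in the weighted norm $\|\cdot\|_M$, and a naive bound only returns the condition number $\kappa(M)$ rather than a factor tending to one. The content of \eqref{eq:ineq3} is precisely the quantitative near-orthogonality of $M^{-1}\bv$ and $M\bv$ along the secant directions that actually occur, which is what rescues a near-unit contraction factor.

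Finally I would settle well-definedness and conclude. For $\bx\neq\bx^\ast$ in a small neighborhood, $F(\bx)\neq\bx$ and \eqref{eq:ineq2} gives $\|\bv\| \ge \rho^{-1}\|F(\bx)-\bx\| > 0$, so the division by $\bv^T\bv$ in \eqref{eq:BFGS_update} is legitimate and the update is well-defined on the required neighborhood $N$; non-singularity of the iterates $H_k$ together with uniform bounds on $\|H_k\|$ and $\|H_k^{-1}\|$ then follow directly from the conclusion of Theorem~\ref{th:convergence}. Having verified A\ref{ass1} (given) and A\ref{ass2} on neighborhoods $N_1\times N_2$ with $N_1\subset N'\cap D\cap S$, Theorem~\ref{th:convergence} applies and yields local convergence of $\bx_{k+1}=\bx_k-H_kG(\bx_k)$ to $\bx^\ast$, completing the argument.
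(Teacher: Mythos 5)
Your proposal is correct and reaches the conclusion by the same overall architecture as the paper: reduce everything to verifying Assumption~2 so that Theorem~\ref{th:convergence} applies, derive the error-propagation identity (your $\bar H-\Phi=(H-\Phi)P_\bv+(\bu-\Phi\bv)\bv^T/(\bv^T\bv)$ is exactly the paper's Eq.~\eqref{eq:MEM} after conjugation by $M$), bound the additive ``noise'' term by Lemma~\ref{lemma:lipchitz} applied to the secant endpoints $F(\bx)$ and $\bx$ together with $\|\bu\|\leq\rho\|\bv\|$, and handle well-definedness by noting $\bv\neq 0$ away from $\bx^\ast$. Where you genuinely diverge is the deterioration term: the paper imports Lemma~\ref{lemma:MEM}(c) (the Broyden--Dennis--Mor\'e machinery) to get $\|E\widetilde{Q}\|_F\leq\bigl[\sqrt{1-\alpha\theta^2}+(1-\beta)^{-1}\mu_2\|\bv\|^d\bigr]\|E\|_F$, whereas you bound $\|E\widetilde{Q}\|_F\leq\|E\|_F\,\|\widetilde{Q}\|_2$ and compute the spectral norm of the rank-one oblique projector directly, using \eqref{eq:ineq3} to show $\|\widetilde{Q}\|_2\leq 1+O(\mu_2\|\bv\|^d)$ (your claimed $O(\|\bv\|^{2d})$ sharpening also checks out, though the first-order bound already suffices for A2). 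Your route is more elementary and self-contained --- it avoids parts (a)--(c) of Lemma~\ref{lemma:MEM} entirely and makes transparent that \eqref{eq:ineq3} is precisely a near-commutativity condition on $M$ along the secant directions. What it gives up is the factor $\sqrt{1-\alpha\theta^2}<1$, which is immaterial for Theorem~\ref{th:qnm_convergence} itself but is exactly the quantity the paper telescopes in the proof of Corollary~\ref{cor:q-superlinear} to force $\|[H_k-dG(\bx^\ast)^{-1}]\bv_k\|/\|\bv_k\|\to 0$; so if you intend to continue to the superlinearity result you would need to reinstate the sharper bound. Two cosmetic points: the exponent in \eqref{eq:ineq3} should be read as $d$ (the H\"older exponent of A1) rather than the dimension $p$, consistent with how both you and the paper actually use it; and your constant $\alpha_2=K\rho\|\Phi\|\|M\|^2$ matches the paper's up to the harmless factor $2$ coming from their use of Lemma~\ref{lemma:MEM}(d) in place of your direct rank-one Frobenius computation.
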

We emphasize that this result does not require stronger conditions than those imposed in the classical results pertaining to Broyden acceleration, 
which have endured as  reasonable mild assumptions in the optimization literature. 
\begin{corollary} \label{cor:q-superlinear}
	If further $\displaystyle \lim_{k \to \infty}\|\bx_{k+1} - F(\bx_k)\|/\|\bx_k - \bx^\ast\| = 0$ holds, then the convergence rate of $\{ \bx_k \} $ to $\bx^\ast$ is Q-superlinear.
\end{corollary}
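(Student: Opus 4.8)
The plan is to reduce the statement to the quasi-Newton superlinear criterion of Corollary~\ref{cor:superlinear_conv} and then to use the stated limit to verify its premise. Write $\be_k := \bx_k - \bx^\ast$ and let $J := dG(\bx^\ast)$. By Theorem~\ref{th:qnm_convergence} the iteration $\bx_{k+1} = \bx_k - H_k G(\bx_k)$ converges to $\bx^\ast$, and by Theorem~\ref{th:convergence} the convergence is at least linear with $\{\|H_k\|\}$ and $\{\|H_k^{-1}\|\}$ uniformly bounded. Invoking the first inequality of Lemma~\ref{lemma:lipchitz} with $\widehat{\bx} = \bx^\ast$, $\by = \bx_k$, $\bz = \bx^\ast$, and using $G(\bx^\ast) = 0$, gives
\[
G(\bx_k) = J\be_k + \br_k, \qquad \|\br_k\| \leq K\|\be_k\|^{1+d} = o(\|\be_k\|).
\]
Substituting into the update yields the error recursion
\[
\be_{k+1} = (I_p - H_k J)\be_k - H_k \br_k = (I_p - H_k J)\be_k + o(\|\be_k\|),
\]
where the last equality uses the uniform bound on $\|H_k\|$. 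Hence Q-superlinear convergence is equivalent to the Dennis--Mor\'e-type condition $\|(I_p - H_k J)\be_k\| = o(\|\be_k\|)$, and it suffices, by Corollary~\ref{cor:superlinear_conv}, to exhibit a subsequence along which $\|H_k - J^{-1}\|_M \to 0$.

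It remains to extract this vanishing of the inverse-Jacobian error from the stated limit. From $F(\bx_k) = \bx_k + G(\bx_k)$ and the update rule one has the exact identity $\bx_{k+1} - F(\bx_k) = -(I_p + H_k)G(\bx_k)$. Since the two-sided bound \eqref{eq:ineq2} of Lemma~\ref{lemma:lipchitz} gives $\|G(\bx_k)\| \asymp \|\be_k\|$, the hypothesis $\|\bx_{k+1} - F(\bx_k)\|/\|\bx_k - \bx^\ast\| \to 0$ is equivalent to $\|(I_p + H_k)G(\bx_k)\|/\|G(\bx_k)\| \to 0$, i.e. $(I_p + H_k)$ asymptotically annihilates the residual $G(\bx_k)$. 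The plan is to feed this directional information into the secant condition $H_k \bv_k = \bu_k$, with $\bu_k = G(\bx_k)$ and $\bv_k = G(F(\bx_k)) - G(\bx_k)$, together with the bounded-deterioration estimate \eqref{eq:jacobian_error} of Assumption~\ref{ass2}, which controls the per-step growth of $\|H_k - J^{-1}\|_M$; combining the two I would identify the limiting behaviour of $H_k$ along the iterate directions and thereby produce the required subsequence.

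The main obstacle is precisely this last bridge. Unlike classical Broyden's method, where the secant direction coincides with the Newton step so that the Dennis--Mor\'e condition holds automatically, BQN drives its update with the MM-extrapolated secant direction $\bv_k$, which differs from the step direction $\bs_k = -H_k G(\bx_k)$; consequently superlinearity is not free, and the stated limit is the extra hypothesis engineered to compensate for this mismatch. The technical heart is to upgrade the directional statement above to the operator-norm convergence $\|I_p - H_k J\| \to 0$ that controls $(I_p - H_k J)\be_k$ uniformly in the error direction, handling the difficulty that the normalised errors $\be_k/\|\be_k\|$ need not sample all of $\mathbb{R}^p$. I would address this by leaning on the bounded-deterioration bound \eqref{eq:jacobian_error}, which limits the drift of $H_k$ between iterations, so that the per-direction information accumulated through the hypothesis and the secant condition can be promoted to convergence of $\|H_k - J^{-1}\|_M$ along a subsequence, at which point Corollary~\ref{cor:superlinear_conv} closes the argument.
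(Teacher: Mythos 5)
There is a genuine gap: your proof sets up the right ingredients but leaves the central step as a plan, and the plan points in a direction the argument cannot take. You propose to close the proof by exhibiting a subsequence along which $\|H_k - dG(\bx^\ast)^{-1}\|_M \to 0$ and then invoking Corollary~\ref{cor:superlinear_conv}. The paper's proof explicitly treats that as only one branch of a dichotomy and devotes its entire second half to the complementary case, where $\|H_k - dG(\bx^\ast)^{-1}\|_M$ stays bounded away from zero; no subsequential convergence of $H_k$ (let alone operator-norm convergence of $I_p - H_k\,dG(\bx^\ast)$) is ever established, and there is no reason to expect it to hold. The missing idea is the classical telescoping trick: the sharpened bounded-deterioration inequality \eqref{eq:H_ineq} carries a gain term $-\tfrac{3}{16}\theta_k^2\|H_k - dG(\bx^\ast)^{-1}\|_M$, and summing it over $k$ (using the geometric decay of $\|\bx_k-\bx^\ast\|$ from Theorem~\ref{th:convergence}) yields $\sum_k \theta_k^2\|H_k - dG(\bx^\ast)^{-1}\|_M < \infty$, which forces only the \emph{directional} convergence $\|[H_k - dG(\bx^\ast)^{-1}]\bv_k\|/\|\bv_k\| \to 0$ along the secant directions $\bv_k$. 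Your stated obstacle --- upgrading a directional statement to uniform control over the error directions $\be_k/\|\be_k\|$ --- is a problem the correct argument never has to solve.

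The second missing piece is how the hypothesis actually enters. Rather than aiming at a Dennis--Mor\'e condition on $(I_p - H_k\,dG(\bx^\ast))\be_k$, one uses the identity $H_k\bv_k = H_kG(F(\bx_k)) + \bs_k$ to rewrite $[H_k - dG(\bx^\ast)^{-1}]\bv_k$ in terms of $G(F(\bx_k))$, and the term $\bs_k - \bu_k = \bx_{k+1} - F(\bx_k)$ is exactly where the hypothesis $\|\bx_{k+1}-F(\bx_k)\| = o(\|\bx_k-\bx^\ast\|)$ is consumed, giving $\|G(F(\bx_k))\|/\|\bu_k\| \to 0$. Superlinearity then follows from the decomposition $\|\bx_{k+1}-\bx^\ast\| \leq \|\bx_{k+1}-F(\bx_k)\| + \|F(\bx_k)-\bx^\ast\|$ together with the two-sided bound \eqref{eq:ineq2}, not from an error recursion in $\be_k$. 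Your identity $\bx_{k+1}-F(\bx_k) = -(I_p+H_k)G(\bx_k)$ is correct and your reading of why the extra hypothesis is needed (the BQN secant direction $\bv_k$ differs from the step direction) is exactly right, but as written the proof does not go through: the bridge you defer is the whole proof, and the route you sketch for it leads to a statement that is stronger than what is available.
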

The complete technical proofs of these results are detailed in the Appendix.
\section{Results and Empirical Performance} \label{sec:examples}

We now turn to a  performance assessment 
on a variety of real and simulated data examples, including (a) quadratic minimization using Landweber's method, (b) maximum likelihood estimation in a truncated beta-binomial model, (c) the largest (and smallest) eigenvalue problem for symmetric matrices, and (d) location-scale estimation of a multivariate $t$-distribution. 
These problems were used in prior studies that introduced the competing methods we benchmark against, thus offering a conservative comparison.
For comparison, we consider (1) unaccelerated MM, (2) the ZAL accelerator, 
(3) the three variants of SQUAREM, and (4) our proposed BQN method as well as (5) its limited memory variant L-BQN.

All methods are implemented using \texttt{R}; we use the implementation of  ZAL and SQUAREM in the \texttt{R} package \texttt{turboEM}.  Throughout our examples, we use the first-order ($K=1$) scheme for SQUAREM as proposed by \cite{varadhan2008simple} as the standard of comparison, since the $K=2$ and $K=3$ schemes are deemed less reliable  by the original authors. The implementation of the proposed accelerators, BQN and L-BQN, and all data examples are implemented as an \texttt{R} package \texttt{quasiNewtonMM} \footnote{\texttt{https://github.com/medhaaga/quasiNewtonMM}}. We consider $q=1$ and $q=2$ secant conditions for the proposed method as well ZAL. 

Stopping criteria are matched across all methods, declaring convergence at $\tilde{\bx}$ when $\|F(\tilde{\bx}) - \tilde{\bx}\| \leq \epsilon$ for a specified tolerance $\epsilon$. For ZAL and BQN, we revert to the original MM step whenever updates violate monotonicity, following \citet{zhou2011quasi}. In most cases,
we observe that BQN performs strikingly well and at least on par with its competitors. An overall theme is that existing methods may outpace our approach on some examples but then falter on a case-by-case basis, while BQN succeeds consistently. 

\subsection{Landweber's method for quadratic minimization}

\begin{table}

	\centering
	\small
	{\begin{tabular}{|c| c c c| } 
			\hline
			Algorithm & $F$ evals  & Time (in sec) & Objective \\ [0.5ex] 
			\hline
			
			MM & 194872.5 (179472.5, 207076.8)  & 4.218 (3.870, 4.470)  & -24.059 (-24.059, -24.059) \\
			BQN, $q=1$ & 5724.0 (4719.5, 6510.0)  & 0.400 (0.330, 0.450) & -24.060 (-24.061, -24.059) \\
			BQN, $q=2$ & 2953.0 (2616.5, 3501.0)   & 0.226 (0.196, 0.274)  & -24.060 (-24.061, -24.059)   \\
			L-BQN & 12856.0 (11260.0, 13772.5)  & 0.631 (0.562, 0.698)  & -24.059 (-24.060, -24.059)   \\
			SqS1 & 2150.0 (1926.5, 2412.0)  & 0.140 (0.127, 0.156) & -24.107 (-24.108, -24.105)\\
			SqS2 & 12665.0 (11515.5, 13855.0)  & 0.833 (0.745, 0.909) & -24.097 (-24.098, -24.097) \\
			SqS3 & 5911.0 (5092.0, 6374.5) & 0.410 (0.348, 0.445)  & -24.106 (-24.106, -24.106)   \\
			ZAL & 23015.5 (21638.2, 24150.7)  & 1.655 (1.544, 1.741) & -24.108 (-24.108, -24.108)  \\[1ex] 
			
			\hline
			
	\end{tabular}}
 \caption{ Quadratic minimization of $f(\theta) = \theta^T A \theta/2 + b^T \theta$ for $100$ random starting points. 
 }
\label{tab:quadratic}
\end{table}

We begin with the ``well-behaved" problem of minimizing a quadratic function $f: \mathbb{R}^p \to \mathbb{R}$ using an MM iterative scheme. For $\theta \in \mathbb{R}^p$, consider a quadratic objective function
\[
f(\theta) = \dfrac{1}{2}\theta^T A \theta + b^T \theta\,,
\]
where $A$ is a $p \times p$ positive definite matrix and $b \in \mathbb{R}^p$. The exact solution is available by solving the linear equation $A\theta = -b$, but incurs a complexity of $\mathcal{O}(p^3)$.
To avoid this computational cost, Landweber's method instead effects an iterative scheme, making use of the Lipschitz property of gradient of $f(\theta)$. The method can be viewed from the lens of majorization-minimization  \citep{lange2016mm}: since $\nabla f(\theta) = A \theta + b$, we can write the gradient inequality
\[
\|\nabla f(\theta) - \nabla f(\Phi)\| = \|A(\theta - \Phi)\| \leq \|A\|\|\theta - \Phi\| .
\]
As a consequence, the spectral norm of A is the Lipschitz constant for $\nabla f(\theta)$. Let the constant $L > \|A\|$. Landweber's method gives the following majorization for $f(\theta)$:
\begin{align*}
	f(\theta) \leq f(\Phi) + \nabla f(\Phi)^T (\theta - \Phi) + \dfrac{L}{2}\|\theta - \Phi\|^2 .
\end{align*}
Minimizing the above surrogate function then yields the MM update formula
\[
\theta_{n+1} = \theta_n - \dfrac{1}{L}\nabla f(\theta_n) = \theta_n - \dfrac{1}{L}(A\theta_n + b) .
\]
Consider the problem dimension to be $p=100$ and tolerance to be $\epsilon = 10^{-5}$. We use a randomly generated $A$ and $b$ such that at true minima, the value of objective function is $-24.10846$. Due to the simple structure of the optimization problem, we might expect all algorithms to perform reasonably well, while we already see the unaccelerated MM algorithm converges very slowly.   
Table~\ref{tab:quadratic} reports performance in terms of the median and interquartile range, comparing the number of $F$ function evaluations ($F$ evals), wall-clock time, and objective values at convergence over $100$ random initializations centered at the true mean, perturbing each component by normal noise with variance $1000$. Figure~\ref{fig:quad_boxplot} displays runtime and function evaluations as boxplots for BQN with $q=1$ (B1), BQN with $q=2$ (B2), L-BQN (L-B), SQUAREM-3, and ZAL. Initial values are matched across methods for each trial.   
Given the strongly convex objective, all methods successfully deliver the minimum here. 

Our proposed BQN method with $q=1$ performs on par with the default SQUAREM-3, while using  $q=2$ secant conditions provides further  improvement. However, we notice that SQUAREM-1 outpaces our method in this case. This may be unsurprising as this ``easy" problem is favorable to methods that do not need to fully utilize curvature information, but are simple and fast. 
It is also worth noting that the variations of SQUAREM already perform quite differently from one another, suggesting significant  sensitivity to the choice of step-length. As mentioned earlier, the performance of ZAL tends to depend on the starting point, and we observe it tends to converge more slowly in this case when initialized with large perturbations of the true value.  

\begin{figure}[htbp]
	\centering
	\includegraphics[width = .9\textwidth]{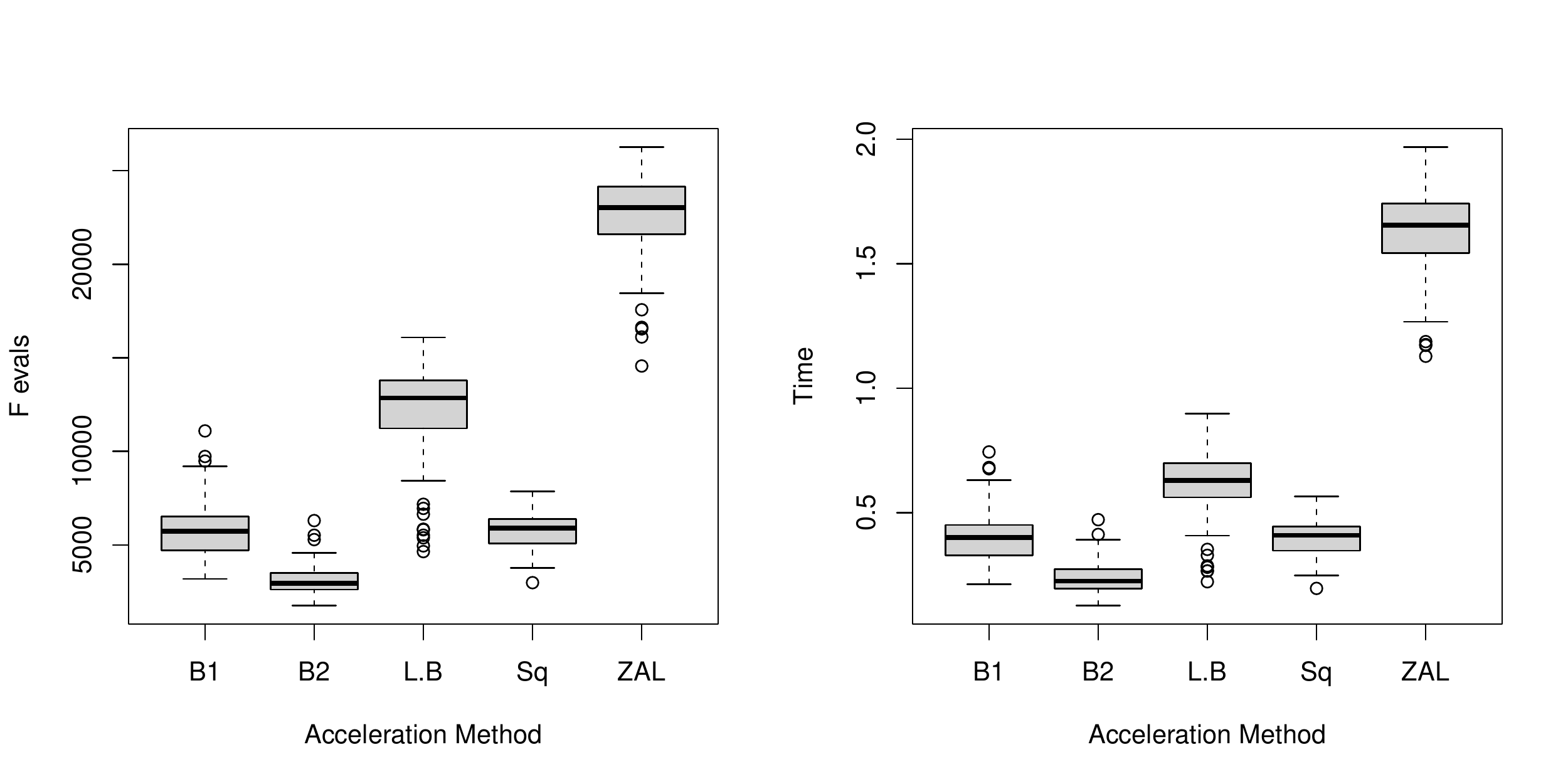}
	\caption{Quadratic minimization: number of function  evaluations and runtime over $100$ random starting points. }

\label{fig:quad_boxplot}
\end{figure}

\subsection{Truncated Beta Binomial} \label{ex:trunc.beta.binom}

We next consider a more difficult statistical optimization problem, turning to the cold incidence dataset by \cite{lidwell1951observations}.  These data have been modeled as a zero-truncated beta binomial model as the reported households have at least one cold incidence. 
The data includes four different household types. We analyze the subset of data corresponding to all adult households here; further details on the data and results for other subsets of the data appear in Table~\ref{tab:beta_binom2} in the Appendix. Among adults, the number of households with $1, 2, 3, \text{ and }4$ cases are $15, 5, 2, \text{ and }2$ respectively.  

Suppose $n$ is the total number of independent observations (households) and $x_i$ denotes the number of cold cases in the $i^{th}$ household. This can be modeled as a discrete probability model \citep{zhou2011quasi} with likelihood given by
\[
L(\theta | X) = \prod_{i=1}^{n}\dfrac{d(x_i| \theta)}{1-d(0|\theta)}\,.
\]
Here $d(x|\theta)$ is the probability density function for a beta binomial distribution with parameter vector $\theta$ and maximum count of $m=4$. Here $\theta = (\alpha, \pi)$ such that $\pi \in (0,1)$ and $\alpha >0$. We use MM algorithm to numerically maximize the likelihood function. The MM updates are given by 

\begin{align*}
	\alpha_{t+1} &= \dfrac{\sum_{j=0}^{m-1} (\dfrac{s_{1j}j \alpha_t}{\pi_t + j\alpha_t} + \dfrac{s_{2j}j \alpha_t}{1 - \pi_t + j\alpha_t})}{\sum_{j=0}^{m-1}\dfrac{r_j j }{1 + j \alpha_t}}\\
	\pi_{t+1} &= \dfrac{\sum_{j=0}^{m-1}\dfrac{s_{1j}\pi_t}{\pi_t + j\alpha_t}}{\sum_{j=0}^{m-1}(\dfrac{s_{1j \pi_t}}{\pi_t + j \alpha_t} + \dfrac{s_{2j}(1-\pi_t)}{1- \pi_t + j \alpha_t})}
\end{align*}
where $s_{1j}, \, s_{2j},\, r_j$ can be interpreted as pseudocounts, given by
\begin{align*}
	s_{1j} &= \sum_{i=1}^{n}1_{x_i \geq j+1}\\
	s_{2j} &= \sum_{i=1}^{n} \left[ 1_{x_i \leq m-j-1} + \dfrac{g(0|\pi_t, \alpha_t)}{1 - g(0 | \pi_t, \alpha_t)} \right]\\
	r_j &= \sum_{i=1}^{n}\left[ 1 +  \dfrac{g(0|\pi_t, \alpha_t)}{1 - g(0 | \pi_t, \alpha_t)} \right] 1_{t \geq j+1}\,.
\end{align*}

\begin{table}[h]
\centering

\begin{tabular}{| c| c c c c|} 
		\hline
		Algorithm & -ln L & $F$ Evals & Iterations & Time (in sec) \\ [0.5ex] 
		\hline
		MM & 25.2283 & 17898 & 17898 & 0.114 \\ 
		BQN ($q=1$) & 25.2287 & 26 & 14 & 0.001 \\
		BQN ($q=2$) & 25.2277 & 29 & 16 & 0.001\\
		L-BQN & 25.2288 & 73 & 37 & 0.002\\
		SqS1 & 25.2274 & 1797 & 1769 & 0.160\\
		SqS2 & 25.2277 & 36 & 19 & 0.004\\
		SqS3 & 25.2269 & 69 & 35 & 0.005\\
		ZAL & 25.2269 & 28 & 24 & 0.003\\[1ex]
		\hline
\end{tabular}
\caption{ Truncated beta binomial: performance on Lidwell and Somerville
		data, from initial point $(\pi,\alpha) = (0.5, 1)$. }
\label{tab:beta_binom1}
\end{table}

\begin{figure}
\centering
\subfloat[MM]{ \includegraphics[width = .45\textwidth]{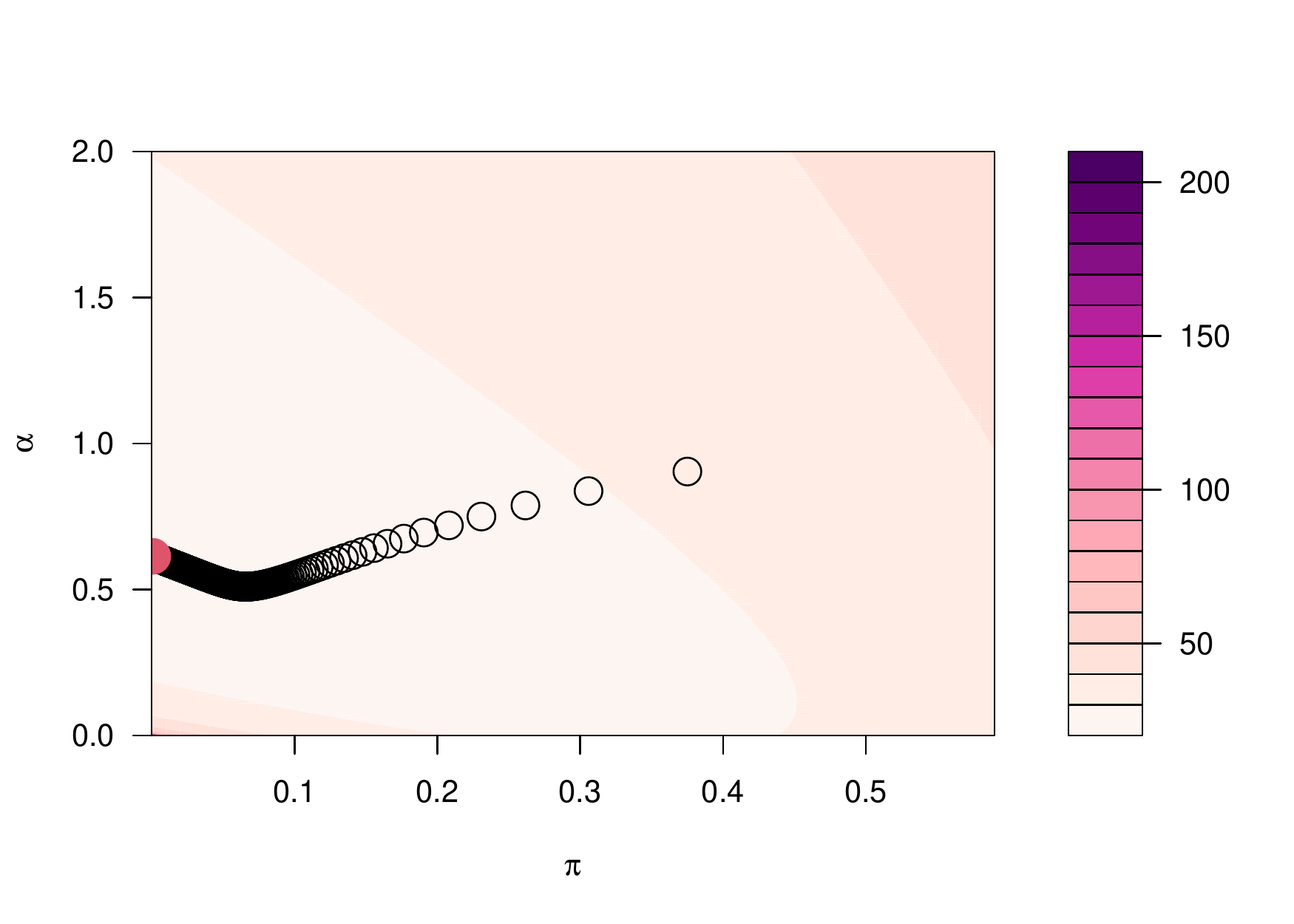}}
\subfloat[BQN, $q=1$]{ \includegraphics[width = .45\textwidth]{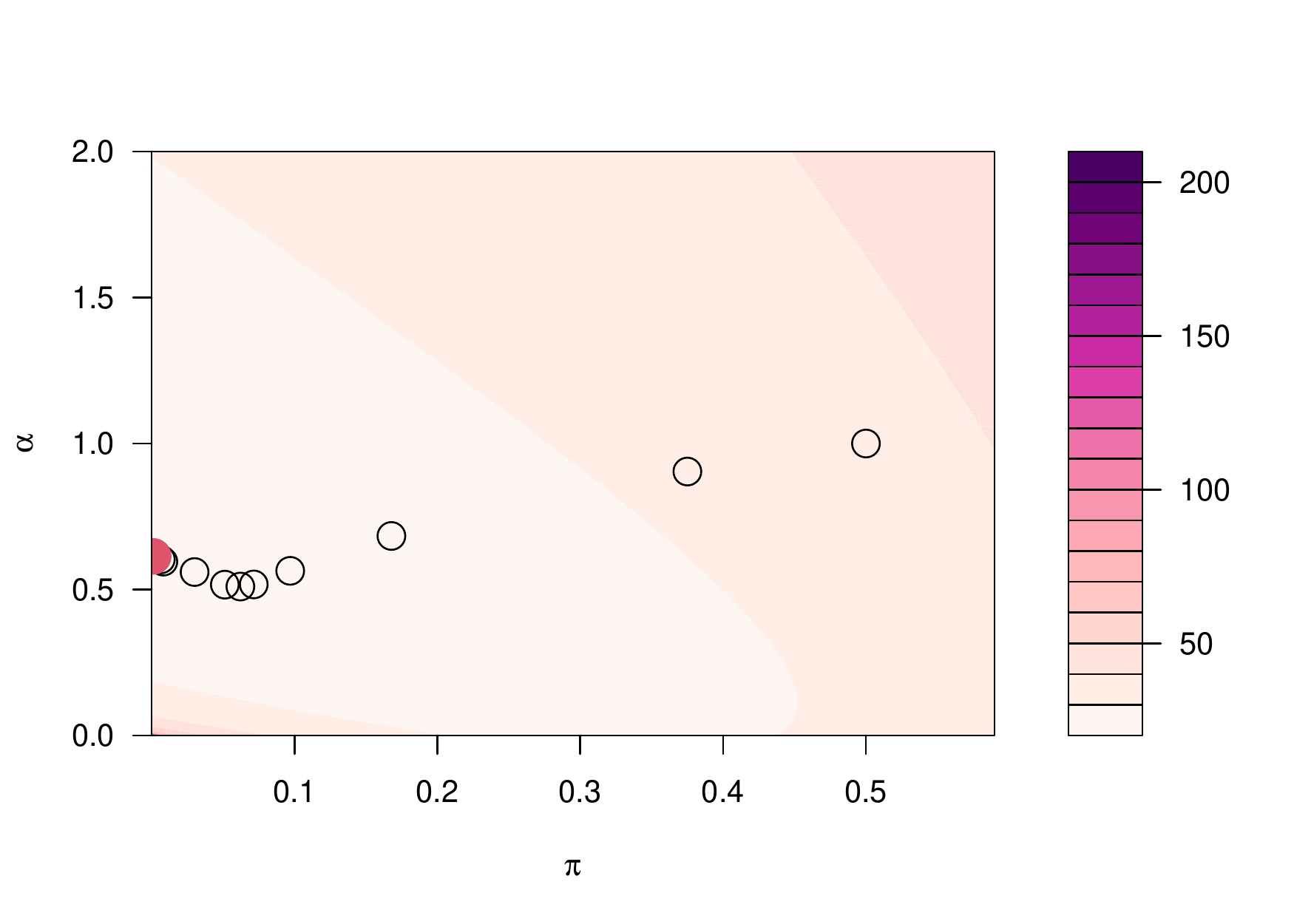}}\\
\subfloat[BQN, $q=2$]{ \includegraphics[width = .45\textwidth]{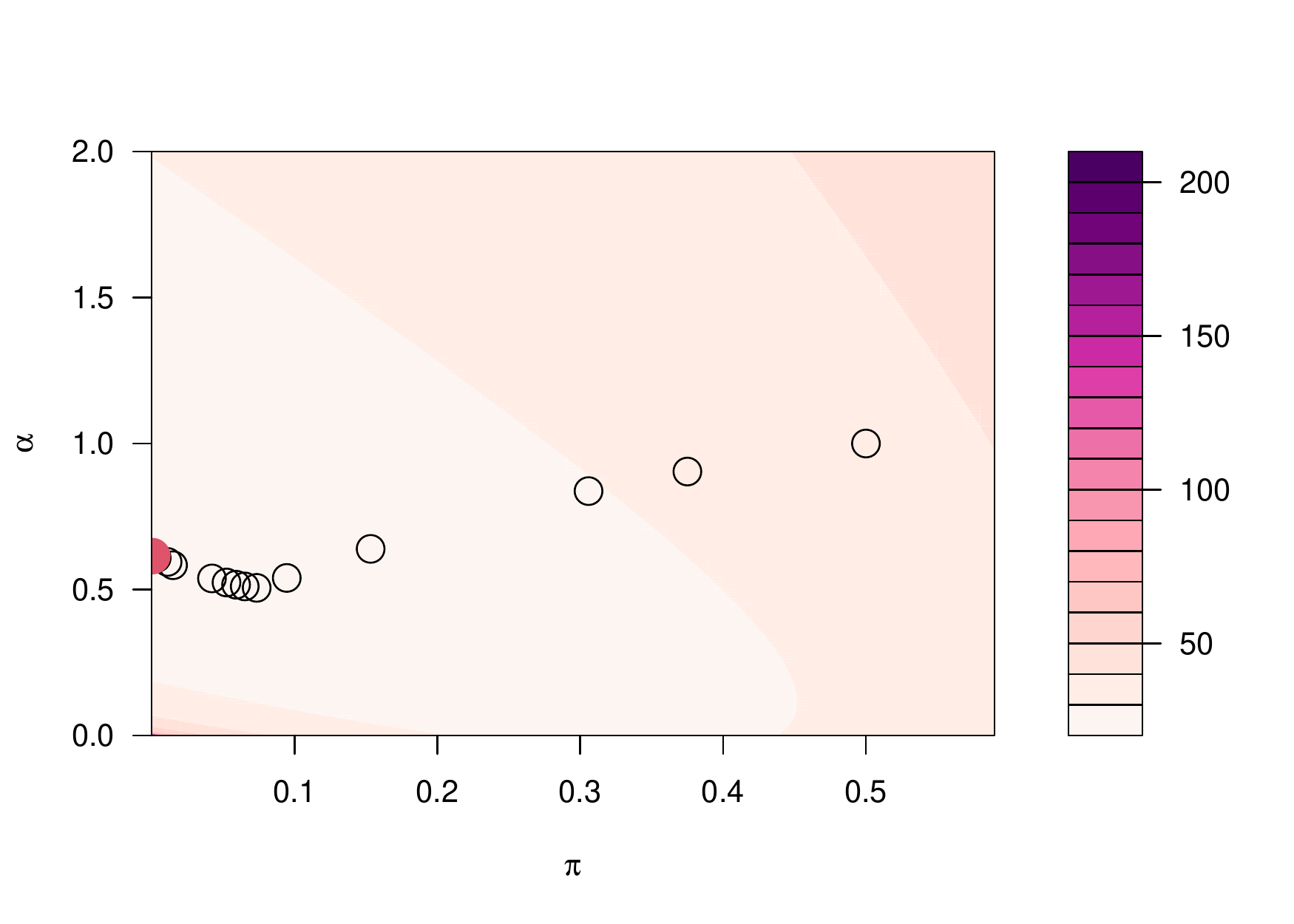}}
\subfloat[L-BQN]{ \includegraphics[width = .45\textwidth]{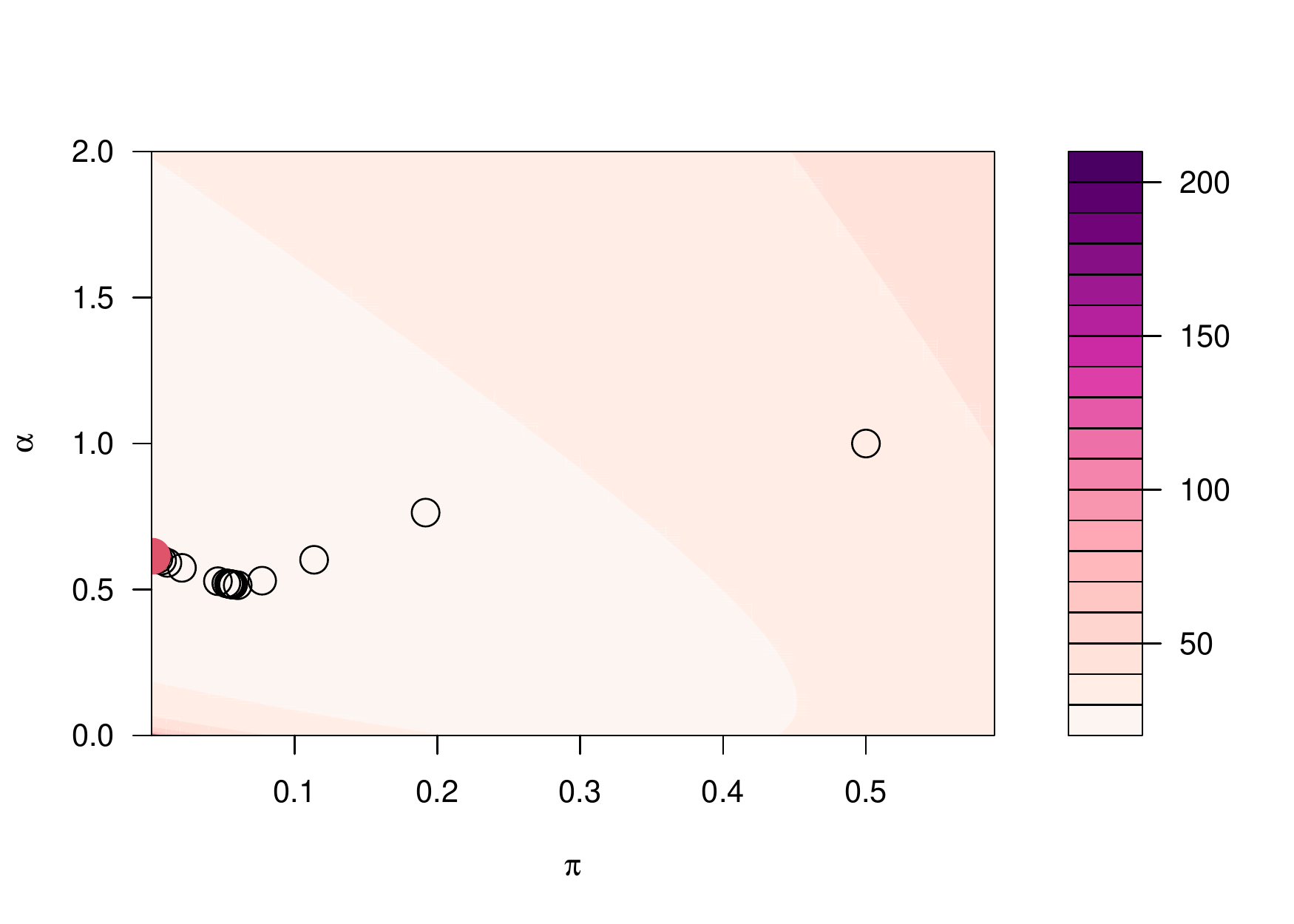}}\\
\subfloat[SQUAREM-3]{ \includegraphics[width = .45\textwidth]{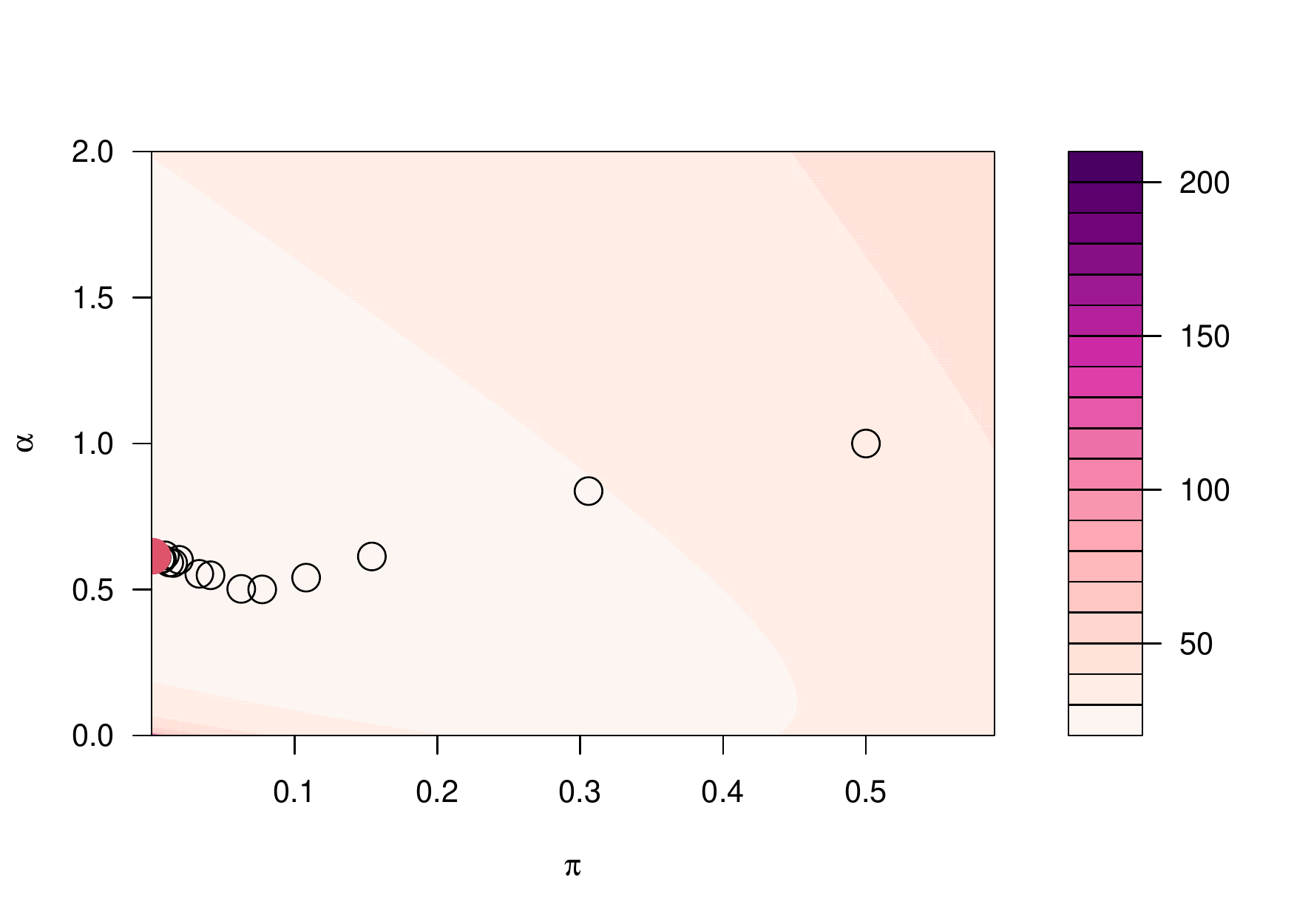}}
\subfloat[ZAL, $q=1$]{ \includegraphics[width = .45\textwidth]{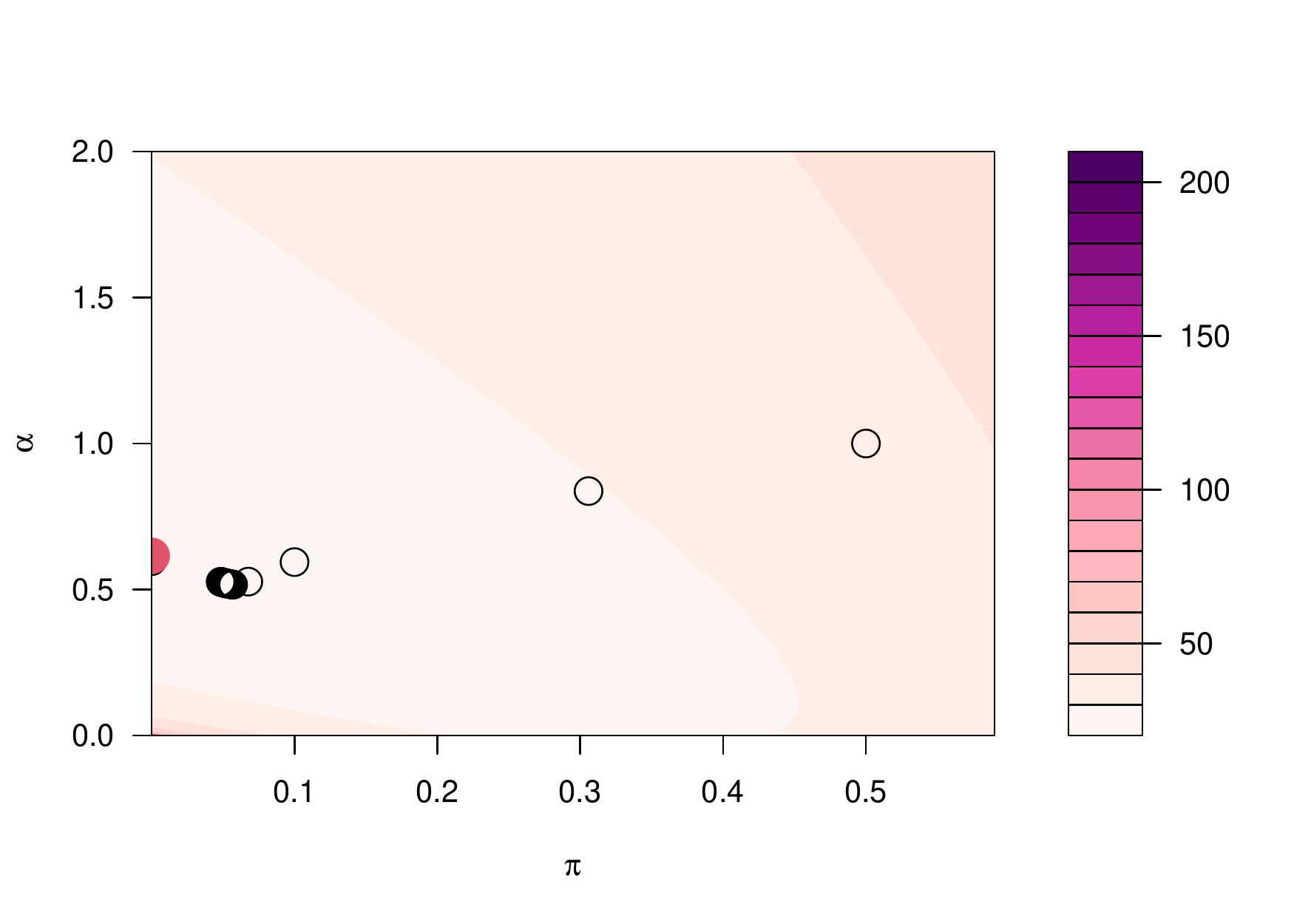}}

\caption{Truncated beta binomial: ascent paths of peer methods on the Lidwell and Somerville household incidence data in a truncated beta binomial model, with  optimum marked in red.}

\label{fig:beta-contour}
\end{figure}

Following \cite{zhou2011quasi}, each algorithm is  initializated at $(\pi, \alpha) = (0.5, 1)$. 
Table \ref{tab:beta_binom1} lists the negative log-likelihood values, number of MM evaluations (F evals), number of algorithm iterations, and runtime until convergence for each algorithm. 
Figure~\ref{fig:beta-contour} provides a closer look, showing the progress path of each algorithm on a contour plot of the objective. SQUAREM methods, though achieving significant acceleration, tend to exhibit slow tail behavior near the optimal value. In particular, SQUAREM-1 leads to orders of magnitude slower convergence than the others, while it outpaced other choices of steplength among variants of SQUAREM in the simple example; we again focus on visualizing the progress of the default SQUAREM-3. In all cases, our method converges in fewer iterations and requires fewer function evaluations than its competitors despite a naive implementation. From Figure~\ref{fig:beta-contour}, we can visualize the advantage of our extrapolation-based steps making steady progress, in contrast to the more congested updates near the optimum under existing methods. While the small problem dimension does not call for a limited-memory method, we see L-BQN also compares favorably despite its streamlined updates. 

\subsection{Generalized eigenvalues} \label{ex:gen.eigen}

In this example, we consider a more complicated objective function that exhibits a zig-zag descent path under the na\"ive MM algorithm, rendering progress excruciatingly slow. For two $p \times p$ matrices $A$ and $B$, the generalized eigenvalue problem refers to finding a scalar $\lambda$ and a nontrivial vector $x$ such that $Ax = \lambda Bx$. We consider the case where $A$ is symmetric and $B$ is symmetric and positive definite, so that the generalized eigenvalues and eigenvectors are real 
\citep{zhou2011quasi}. A simple alternative for finding the generalized eigenvalues iteratively is by optimizing the Rayleigh quotient 
\[
R(x) = \dfrac{x^T A x}{x^T B x} \qquad \qquad x \neq 0\,.
\]
The gradient of $R(x)$ is given by
\[
\nabla R(x) = \dfrac{2}{x^T B x}[Ax - R(x)Bx]\,.
\]

Therefore, a solution of $\nabla R(x)=0$ corresponds to a generalized eigenpair, wherein the maximum of $R(x)$ gives the maximum generalized eigenvalue and minimum gives the minimum generalized eigenvalue. 
To optimize $R(x)$, we consider the line search method for steepest ascent proposed by \cite{hestenes1951solutions} as the base algorithm.

\begin{figure}[!h]
\centering
\subfloat[Smallest Eigenvalue]{\includegraphics[width = .48\textwidth]{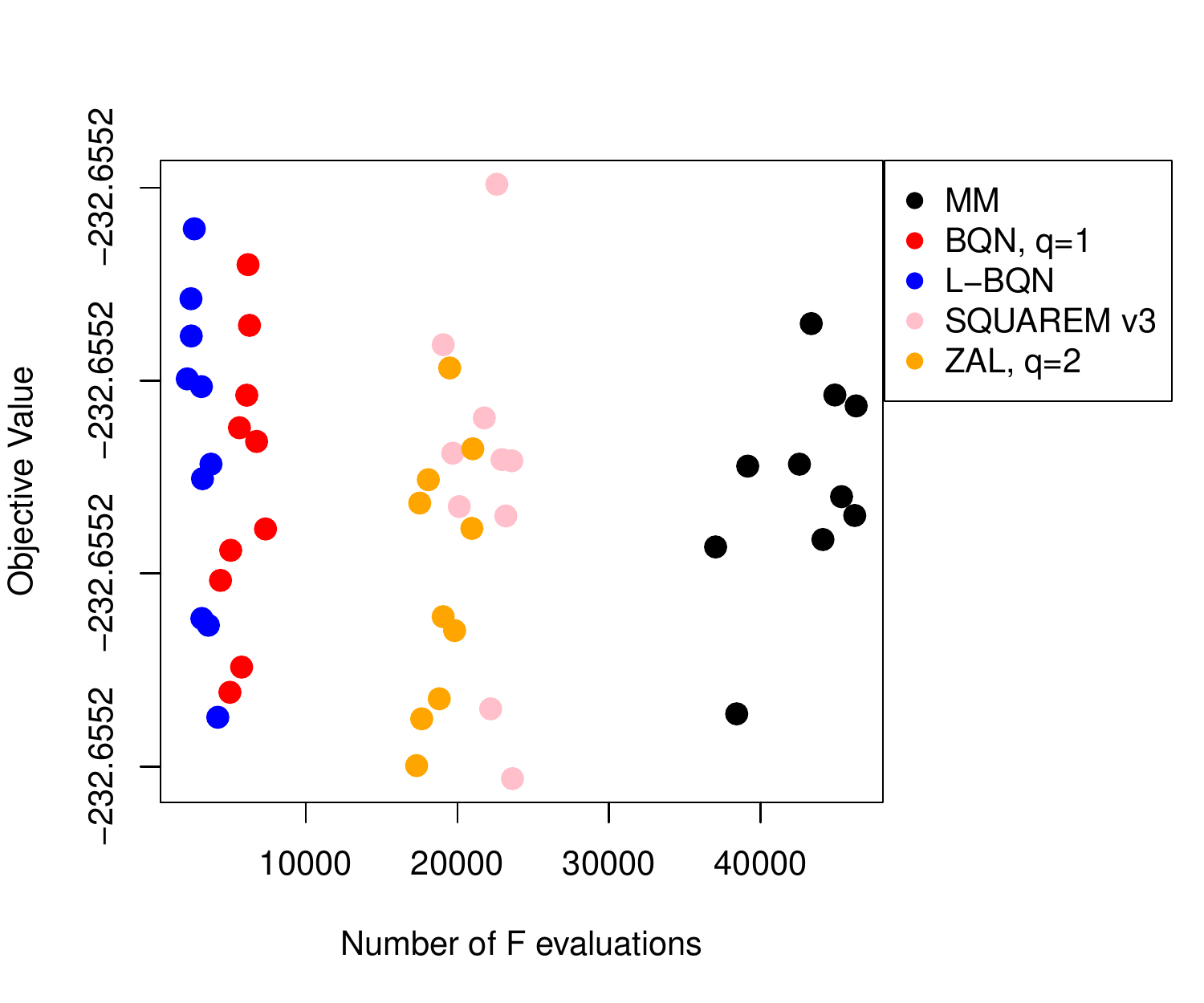}}
\subfloat[Smallest Eigenvalue]{\includegraphics[width = .48\textwidth]{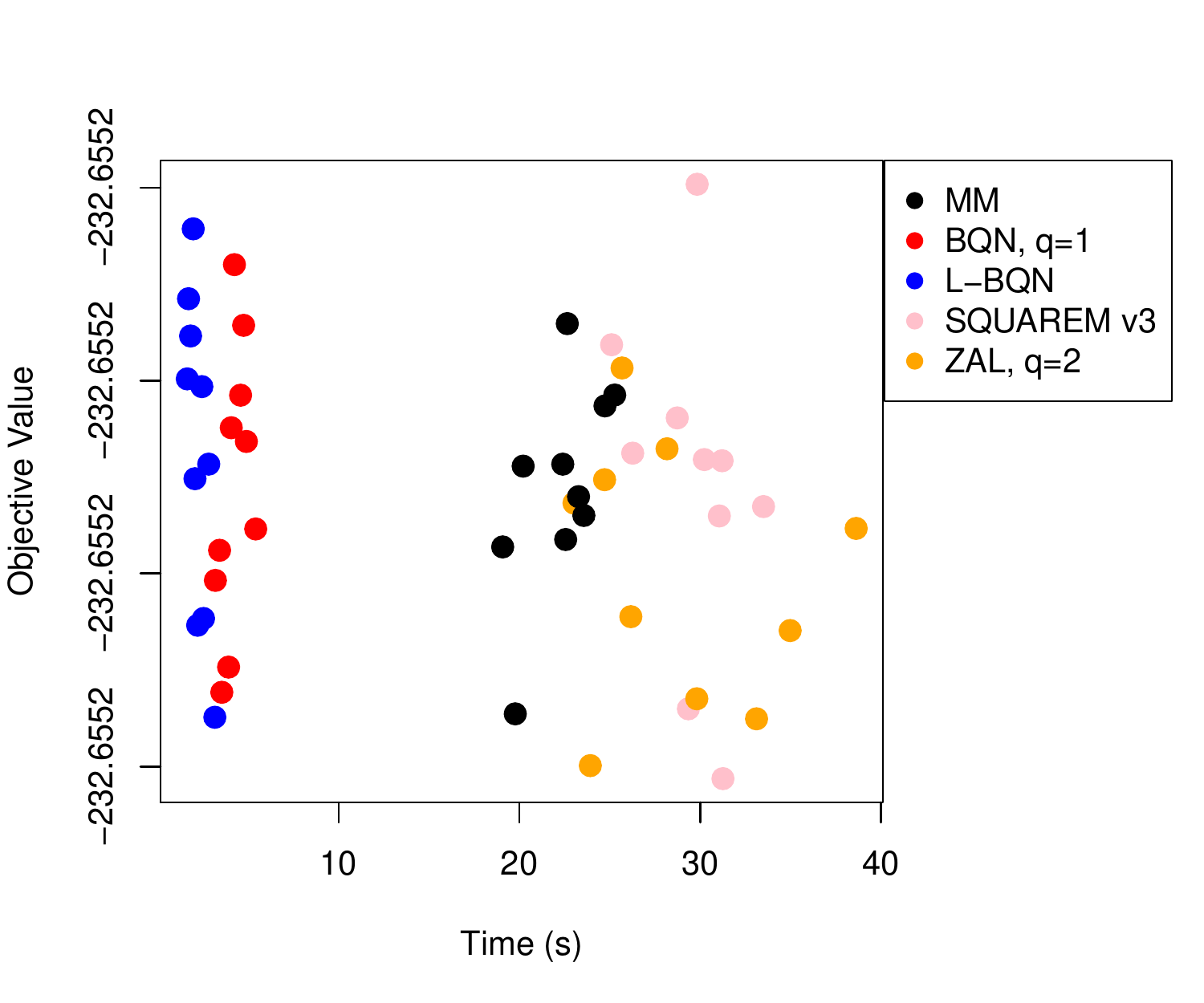}}\\

\subfloat[Largest Eigenvalue]{\includegraphics[width = .48\textwidth]{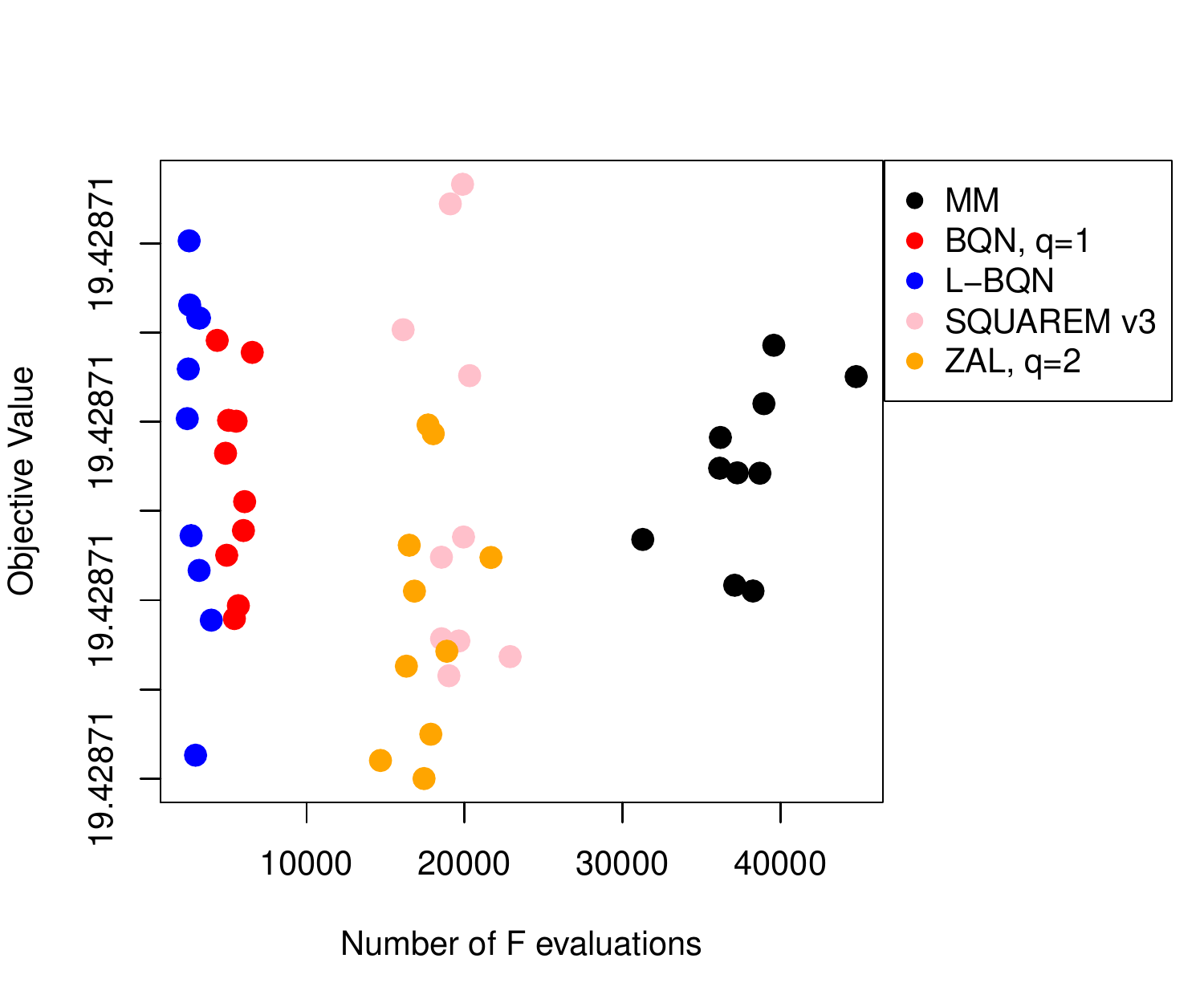}}
\subfloat[Largest Eigenvalue]{\includegraphics[width = .48\textwidth]{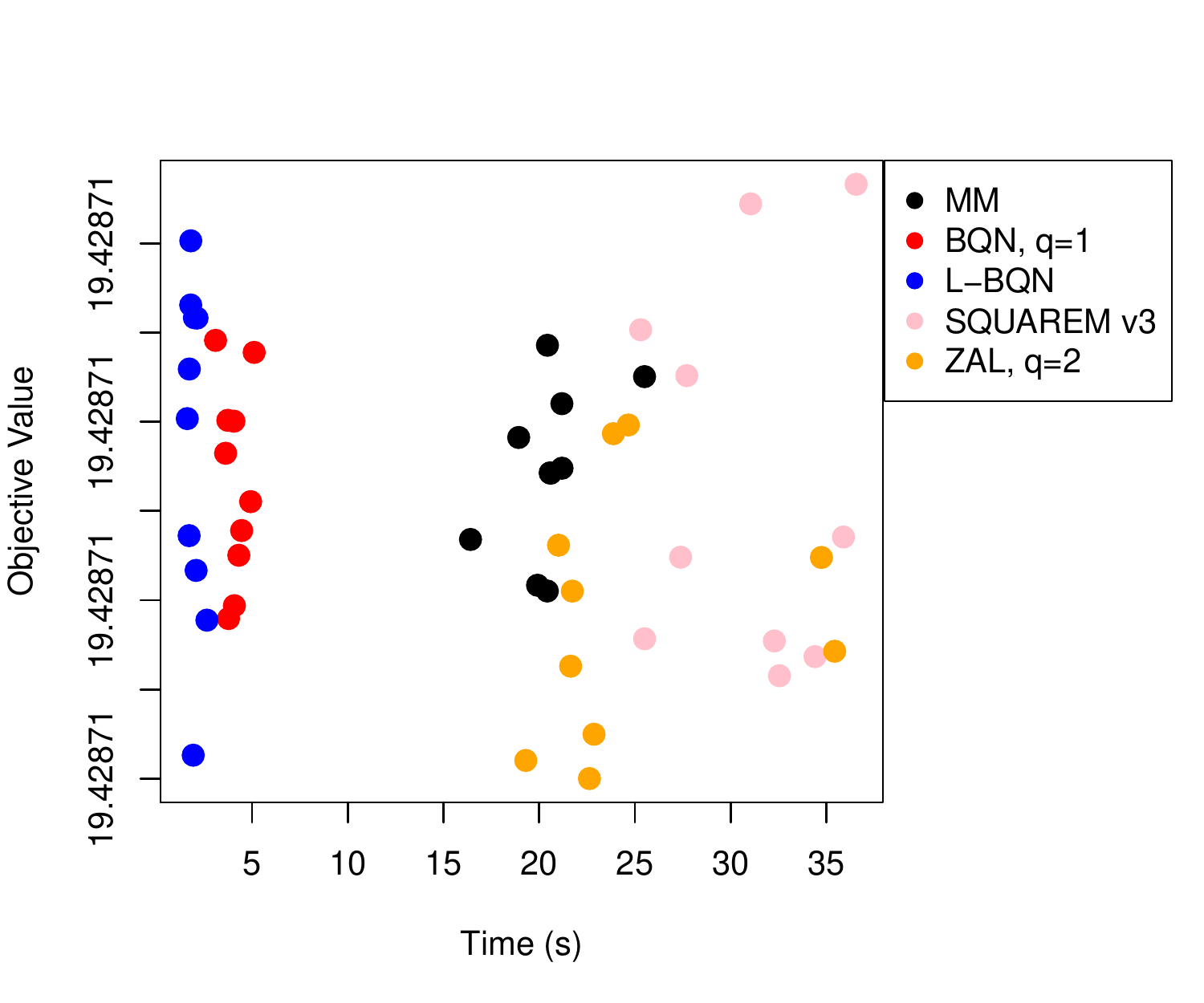}}

\caption{Generalized eigenvalues: we see all methods reach the same objective at convergence up to tolerance, plotted over $10$ restarts against the time and number of F evaluations. }
\label{fig:eigen-sp}
\end{figure}

\begin{table}[h]

\centering

\begin{tabular}{|c | c c c | c c c|} 
\hline
& \multicolumn{3}{|c|}{Smallest Eigenvalue} & \multicolumn{3}{|c|}{Largest Eigenvalue} \\ [0.5ex] 
Algorithm & Time (in sec) & $F$ Evals & Eigenvalue & Time (in sec) & $F$ Evals  & Eigenvalue\\ [0.5ex]
\hline
MM  & 10.104 & 42552 & -232.655 & 8.539 & 38262 & 19.429\\ 
BQN, $q=1$ & 2.046 & 6682 & -232.655 & 1.854 & 5766 & 19.429\\
L-BQN & 1.203 & 4046 & -232.655 & 0.664 & 2399 & 19.429\\
SqS1 & 11.850  & 21777 & -232.655 & 11.397 & 19642 & 19.429 \\
SqS2 & 12.391 & 21777 & -232.655 & 11.003 & 19642 & 19.429 \\
SqS3 & 12.525 & 21777 & -232.655 & 11.021 & 19642 & 19.429\\
ZAL & 9.979 & 17920 & -232.655 & 9.665 & 17415 & 19.429\\ [1ex] 
\hline
\end{tabular}
\caption{
Generalized eigenvalues: number of $F(x)$ evaluations, runtime, and eigenvalues at convergence.}
\label{tab:gen_eigen}
\end{table}

Due to the zigzag nature of steepest ascent on this problem, \cite{zhou2011quasi} found na\"ive acceleration to perform poorly. Utilizing this side information, they considered instead the $s$-fold functional composition of the base algorithm for $s$ even as the underlying map, improving performance. We refrain from using the same heuristic in order to illustrate the  off-the-shelf applicability of our method. 
We consider a simulation study with symmetric matrices $A$ and $B$ randomly generated with $p=100$ dimensions, and run $10$ random initializations of each method from matched initial points.  

Figure~\ref{fig:eigen-sp} displays objective values at convergence, and Table \ref{tab:gen_eigen} details the results. It can be seen that without the $s$-fold functional composition, both SQUAREM and ZAL fails to accelerate meaningfully here. On the other hand, the curvature information is crucial toward informing a good search direction in such cases, and our formulation successfully leverages this information. 
This information is largely ignored in the scalar-based methods SQUAREM, while ZAL attempts to make use of curvature information under an assumption that it is close to the stationary point.


\subsection{Multivariate t-distribution} \label{ex:multi.t.distr}

Our last example turns to estimation under a multivariate $t$-distribution, a robust alternative to  multivariate normal modeling when the errors involve heavy tails \citep{lange1989robust}.  
\cite{varadhan2008simple} considered this example to compare SQUAREM to standard EM as well as PX-EM, an efficient data augmentation method  \citep{meng1997algorithm}. 

Suppose we have $p$-dimensional data $Y = (y_1, ..., y_N)$ that we wish to fit to a multivariate $t$-distribution with unknown degrees of freedom $\nu$. The density is given by 
\[
f(y| \mu, \Sigma) \propto | \Sigma |^{-1/2} \left(\nu + (y - \mu)^T \Sigma^{-1} (y - \mu)\right)^{(\nu + p)/2},
\]
and so the data likelihood is givein by $\prod_{i=1}^{N}f(y_i | \mu, \Sigma)$. There is no closed form solution to find $(\mu, \Sigma)$ which maximize the likelihood, but we can make progress by augmenting the missing data with latent variables. That is, we obtain the complete data $\{(y_i, q_i); i = 1, ..., N\}$ where q are IID from $\chi^2_\nu / \nu$;  the maximum likelihood estimator (MLE) now follows from weighted least squares. In an EM algorithm, the E-step finds the expected complete data log-likelihood conditional on parameters from the previous iteration $k$. 
Conditional on $Y$ and $(\mu_k, \Sigma_k)$, the latent variables are distributed as $q_i \sim \chi^2_{\nu +p}/(\nu + d_i^{(k)})$, where $d_i^{(k)} = (y_i - \mu_k)^T \Sigma_k^{-1} (y_i - \mu_k); i= 1,..., N$. As the complete-data log-likelihood is linear in $q_i$, the E-step amounts to defining
\[
w_i = E[q_i | y_i, \mu_k, \Sigma_k] = (\nu + p)/(\nu + d_i^{(k)}); \qquad i = 1, ..., N\,.
\]
The M-step then yields:
\begin{align*}
\mu_{k+1} &=\sum_{i}w_i y_i \bigg / \sum_{i}w_i\,, \qquad \Sigma_{k+1} \,=\, \dfrac{1}{N} w_i (y_i - \mu)(y_i - \mu)^T\,.
\end{align*}

\begin{table}[h]

\centering
\begin{tabular}{|c | c c c|} 
\hline
Algorithm & $F$ Evals & Time (in sec)  & $-\ln L$ \\ [0.5ex] 
\hline
EM & 744 & 1.086 & 8608.99\\ 
PX-EM & 38 &  0.063 & 8608.99\\
BQN, $q=1$ & 112 & 0.215 & 8608.99\\
BQN, $q=2$ & 223 & 0.457 &  8608.99\\
L-BQN & 89 & 0.114 & 8608.99\\
SqS1 & 64 & 0.156 & 8608.99\\
SqS2 & 65 & 0.148 & 8608.99\\
SqS3 & 63 & 0.155 & 8608.99\\
ZAL, $q = 2$ &  383 & 1.383 &  8608.99\\
[1ex] 
\hline
\end{tabular}
\caption{
Multivariate t-distribution: maximum likelihood estimation of a 25-dimensional multivariate t-distribution. } 
\label{tab:t-dist}
\end{table}

The PX-EM method of \cite{meng1997algorithm} differs only in the $\Sigma$ update, replacing the denominator N by $\sum_{i}w_i$. We randomly generate synthetic data with $\nu = 1$ (a multivariate Cauchy distribution) and parameters $\mu = 0$, $\Sigma = V$, where $V$ is a symmetric randomly generated matrix with dimension $p = 25$, which corresponds to $350$ parameters (25 for $\mu$ and 325 for $\Sigma$). 
We report results obtained from following the initial values suggested by \cite{meng1997algorithm}:

\begin{equation*}
\mu_0 =  \dfrac{1}{N}\sum_{i=1}^{N}y_i, \qquad
\Sigma_0 = \dfrac{1}{N}\sum_{i=1}^{N}(y_i - \overline{y})(y_i - \overline{y})^T\,.
\end{equation*}

Table~\ref{tab:t-dist} displays runtime, number of $F$ evaluations (F evals), and negative log likelihood of all acceleration schemes at convergence. Our method achieves significant acceleration compared to the standard EM algorithm. However, it does not compare well with SQUAREM in this high-dimensional setting. Note that L-BQN performs on par with SQUAREM. Here ZAL fails to provide meaningful acceleration under its implementation in \texttt{turboEM}---we observe it frequently proposes an update such that $\Sigma_k$ is not positive-definite. In these cases, the algorithm reverts to the default MM update, adding additional computational effort, though the implementation in \cite{zhou2011quasi} achieves more success. Though performance is always quite dependent on implementations, we echo the overall theme in the findings of \cite{varadhan2008simple,zhou2011quasi} that  model-specific augmentation under PX-EM performs remarkably well, outpacing all of the more general methods. This example illustrates that despite the robust performance of our proposed method across settings, it is worthwhile to exploit problem-specific structure as does PX-EM whenever possible.

\section{Conclusion}

This article presents a novel quasi-Newton acceleration of MM algorithms that extends recent ideas, but lends them new intuition as well as theoretical guarantees. 
The method  retains gradient information across all components, which is often ignored in other \textit{pure} MM accelerators. 
A key advantage of MM algorithms is their transfer of difficulty away from the original objective function, obtained by the construction of surrogates. While the \textit{hybrid} quasi-Newton MM accelerators \citep{lange1995quasi, heiser1995convergent, lange2000optimization} are rigorously analyzed in the literature, they lose this appeal in part by requiring information from the original objective through their iterates. Our approach seeks to embody the best of both worlds,  retaining the simplicity of pure accelerators without restrictive assumptions, maintaining computational tractability so that it is amenable for large and high-dimensional problems, and taking advantage of richer curvature information that yields classical convergence guarantees which may not hold for its peer methods.

The limited-memory version of our method performs well on our representative, but not exhaustive, set of examples. As this  shows promise toward high-dimensional problems, a fruitful line of research may seek to study the convergence properties of L-BQN explicitly, building on prior analyses on  convergence of limited memory BFGS method \citep{liu1989limited}. Exploring optimal step size selection presents another open direction \citep{nocedal2006numerical}. Despite deriving from a different perspective, it is satisfying that the steplength for our inverse Jacobian update in Eq.\eqref{eq:BFGS_update} reveals that used for the first version of STEM as a special case \cite{varadhan2008simple}. Nonetheless, exploring the practical and theoretical merits of alternatives may reap further advantages.

\bibliographystyle{apalike}
\bibliography{ref}

\section{Appendix} \label{sec:appendix}

\subsection{Proof of Theorem~\ref{th:convergence}}
Let $\mathbb{R}^{p\times p}$ denote linear space of real matrices of order $p \times p$. Recall that $\|A\|_M := \|MAM\|_F$ is a matrix norm of matrix $A$ for any matrix $M$, $\|\cdot\|_F$ is the Frobenius norm, and $\|\cdot\|$ denotes a vector norm or its induced operator norm. 
\begin{proof}[]
	The proof argues that if $\bx_0 \in D$, then $\bx_1$ also lies in D using the inequality in Eq.\eqref{eq:jacobian_error}. Additionally, it is shown that the distance of $\bx_1$ from $\bx^\ast$ is less than or equal to some $r$th fraction of the distance of $\bx_0$ from $\bx^\ast$. By induction, we prove that $\bx_i \in D$ for all $i \geq 1$, and eventually converge to $\bx^\ast$ with $r$ rate of convergence.
	
	To this end, we upper bound the norm of the Jacobian and inverse Jacobian matrices at $\bx^\ast$ as $ \|dG(\bx^\ast)\| \leq \sigma \text{ and } \|dG(\bx^\ast)^{-1}\| \leq \gamma$. For any $r \in (0,1)$, we can choose $\epsilon(r) = \epsilon \text{ and } \delta(r) = \delta$ such that
	\begin{align}
		[2\alpha_1\delta + \alpha_2] \dfrac{\epsilon^d}{1 - r^d} &\leq \delta \label{eq:epsilon_delta1}\\ 
		2\sigma \delta \eta + (\gamma + 2\eta \delta)K \epsilon^d &\leq r \label{eq:epsilon_delta2} \,.
	\end{align}
	If necessary, we may further restrict $\epsilon$ and $\delta$ such that $(\bx,H) \in N$ whenever $\|\bx - \bx^\ast\| < \epsilon$ and $\|H -dG(\bx^\ast)^{-1}\|_M~<~2\delta$. Now suppose $\|\bx_0 - \bx^\ast\| < \epsilon \text{ and } \|H_0 - dG(\bx^\ast)^{-1}\|_M < \delta$. Then  $\|H_0 - dG(\bx^\ast)^{-1}\| < \eta \delta < 2\eta \delta$ by the equivalence of norms in finite-dimensional vector spaces. From Eq.\eqref{eq:epsilon_delta2}, $2 \sigma \delta \eta \leq 2r$, and therefore the Banach lemma gives,
	\[
	\|H_0^{-1}\| \leq \dfrac{\sigma}{1-r}\,.
	\]
	Now, we will show that if $\bx_0 \in D$, then $\bx_1$ also lies in $D$. For this purpose, we add and subtract $H_0 dG(\bx^\ast)(\bx_0 - \bx^\ast)$ and add the null term $H_0G(\bx^\ast)$ to the known update formulation for $\bx_1$ giving
	\begin{align*}
		\bx_1 - \bx^\ast &= \bx_0 - H_0G(\bx_0) - \bx^\ast \\
		&= -H_0\left[G(\bx_0) - G(\bx^\ast) - dG(\bx^\ast)(\bx_0 - \bx^\ast)\right] + \left[I_p - H_0dG(\bx^\ast)\right](\bx_0 - \bx^\ast)\,.
	\end{align*}
	Using the fact that $\|I_p - H_0dG(\bx^\ast)\| = \|dG(\bx^\ast)(H_0 - dG(\bx^\ast)^{-1})\| \leq \|dG(\bx^\ast)\|\|H_0 - dG(\bx^\ast)^{-1}\| \leq \sigma (2  \delta \eta)$ and Inequality \eqref{eq:ineq1},
	\begin{align*}
		\|\bx_1 - \bx^\ast\| &\leq \|H_0\|K\epsilon^d\|\bx_0 - \bx^\ast\| + 2\sigma \delta \eta \|\bx_0 - \bx^\ast\|
		&= \left[\|H_0\|K \epsilon^d + 2 \sigma \epsilon \delta \eta\right]\|\bx_0 - \bx^\ast\|\,.
	\end{align*}
	But $\|H_0\| \leq \|H_0 - dG(\bx^\ast)^{-1}\| + \|dG(\bx^\ast)^{-1}\| \leq 2 \eta \delta + \gamma$. Therefore,
	\[
	\|\bx_1 - \bx^\ast\| \leq [(2 \eta \delta + \gamma)K \epsilon^d + 2 \sigma \epsilon \delta \eta]\|\bx_0 - \bx^\ast\| \leq r\|\bx_0 - \bx^\ast\|
	\]
	using the inequality in Eq.\eqref{eq:epsilon_delta2}, and hence $\bx_1 \in D$. The rest of the proof proceeds by induction. Assume that $\|H_k - dG(\bx^\ast)^{-1}\|_M < 2\delta$,  which implies  $ H_k \in N_2$ and $\|\bx_{k+1} - \bx^\ast\| \leq r\|\bx_k - \bx^\ast\|$ for $k = 0, 1, \dots, m-1$. Now since $\bx_k \in N_1 \subseteq S$, we have $\|F(\bx_k) - \bx^\ast\|^d \leq \tau^d \|\bx_k - \bx^\ast\|^d$ by the local convergence of the MM algorithm in $S$. It follows from the inequality in Eq.\eqref{eq:jacobian_error} that
	\begin{align*}
		\|H_{k+1} - dG(\bx^\ast)^{-1}\|_M - \|H_k - dG(\bx^\ast)^{-1}\|_M &\leq \alpha_1\|\bx_k - \bx^\ast\|^d\|H_k - dG(\bx^\ast)^{-1}\|_M + \alpha_2 \|\bx_k - \bx^\ast\|^d \\
		&\leq \alpha_1 (r^{kd}\epsilon^d)(2\delta) + \alpha_2 r^{kd}\epsilon^d\,.
	\end{align*}
	Therefore, from the inequality in Eq.\eqref{eq:epsilon_delta1}, we have
	\[
	\|H_m - dG(\bx^\ast)^{-1}\|_M \leq \|H_0 - dG(\bx^\ast)^{-1}\|_M + (2 \alpha_1 \delta + \alpha_2)\dfrac{\epsilon^d}{1 - r^d} \leq 2\delta\,.
	\]
	In this way, the induction step is completed by following the same proof as the case for $m=1$. In particular, since $\|H_m - dG(\bx^\ast)^{-1}\| \leq 2\eta \delta$, the Banach lemma implies that
	\[
	\|H_m^{-1}\| \leq \dfrac{\sigma}{1-r}\,.
	\]
	
\end{proof}

\subsection{Proof of Theorem 2}

In order to show that our algorithm satisfies \eqref{eq:jacobian_error} for meeting the conditions of Theorem~\ref{th:convergence}, we write \eqref{eq:BFGS_update} as
\begin{equation} \label{eq:MEM}
	\bar{E} = E\left[I_p - \dfrac{M^{-1}\bv(M\bv)^T}{\|\bv\|^2}\right] + \dfrac{M(\bu - dG(\bx^\ast)^{-1}v)(M\bv)^T}{\|\bv\|^2},
\end{equation}
where $\bar{E} = M(\bar{H} - dG(\bx^\ast)^{-1})M \text{ and } E = M(H - dG(\bx^\ast)^{-1})M$. Eq.\eqref{eq:MEM} will allow us to derive the relationship between $\|H-dG(\bx^\ast)^{-1}\|_M$ and $\|\bar{H}-dG(\bx^\ast)^{-1}\|_M$ satisfying the inequality in \eqref{eq:jacobian_error}. For this purpose, we present the following technical lemma with four important inequalities satisfied by our algorithm. Since our update formulation falls in the classical line of thought, we inherit the following properties directly from the analysis presented by \cite{broyden1973local}, so the proofs have been omitted. Nonetheless, the results have been included for the sake of completion for Theorem~\ref{th:qnm_convergence}.
\begin{lemma} \label{lemma:MEM}
	
	Let $M \in \mathbb{R}^{p\times p}$ be a non-singular symmetric matrix such that
	\begin{equation} \label{eq:lemma2_condition}
		\|M\bc - M^{-1}\bd\| \leq \beta \|M^{-1}\bd\|
	\end{equation}
	for some $\beta \in [0, 1/3]$ and vectors $\bc$ and $\bd$ in $\mathbb{R}^p$ with $\bd \neq \mathbf{0}$. Then using $E$ and $\bar{E}$ as defined earlier,
	\begin{enumerate}
		\item $(1 - \beta)\|M^{-1}\bd\|^2 \leq \bc^T\bd \leq (1 + \beta) \|M^{-1}\bd\|^2$\,.
		\item $E\left[I - \dfrac{(M^{-1}\bd(M^{-1}\bd)^T}{\bc^T\bd}\right] \leq \sqrt{1 - \alpha \theta^2} \|E\|_F$\,.
		\item $\left\|E \left[I - \dfrac{M^{-1}\bd (M\bc)^T}{\bc^T\bd}\right]\right\|_F \leq \left[ \sqrt{1 - \alpha \theta^2} + (1-\beta)^{-1} \dfrac{\|M\bc - M^{-1}\bd\|}{\|M^{-1}\bd\|} \right]\|E\|_F $\,, \\
		where
		\[
		\alpha = \dfrac{1 - 2\beta}{1  - \beta^2} \in [3/8, 1]
		\]
		and 
		\[
		\theta  = \dfrac{\|EM^{-1}\bd\|}{\|E\|_F\|M^{-1}\bd\|} \in [0,1]\,.
		\]
		Moreover, for any $\ba \in \mathbb{R}^p$,
		
		\item $\left\| \dfrac{(\ba - dG(\bx^\ast)^{-1}\bd)(M\bc)^T}{\bc^T\bd} \right\|_F \leq 2\dfrac{\|\ba - dG(\bx^\ast)^{-1}\bd\|}{\|M^{-1}\bd\|}$\,.
	\end{enumerate}
\end{lemma}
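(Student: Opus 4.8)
The plan is to reduce all four inequalities to elementary manipulations in the transformed vector $\bw := M^{-1}\bd$, under which hypothesis \eqref{eq:lemma2_condition} reads $\|M\bc - \bw\| \le \beta\|\bw\|$, and to abbreviate $\sigma := \bc^T\bd$. The single most useful identity is $\bc^T\bd = (M\bc)^T\bw$, which holds because $M$ is symmetric and $\bd = M\bw$. First I would establish inequality (1): writing $M\bc = \bw + (M\bc - \bw)$ gives $\sigma = \|\bw\|^2 + (M\bc - \bw)^T\bw$, and Cauchy--Schwarz together with the hypothesis bounds the cross term by $\beta\|\bw\|^2$ in absolute value, yielding $(1-\beta)\|\bw\|^2 \le \sigma \le (1+\beta)\|\bw\|^2$. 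In particular $\sigma > 0$, which I will reuse repeatedly as a denominator.

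For inequality (2) (where the norm on the left is understood to be Frobenius) I would exploit that $N := I - \bw\bw^T/\sigma$ is symmetric, so $\|EN\|_F^2 = \Tr(E^TE\,N^2)$. Expanding $N^2 = I - \tfrac{2\sigma - \|\bw\|^2}{\sigma^2}\bw\bw^T$ and using $\Tr(E^TE\bw\bw^T) = \|E\bw\|^2$ gives the clean identity
\begin{equation*}
\|EN\|_F^2 = \|E\|_F^2 - \frac{2\sigma - \|\bw\|^2}{\sigma^2}\|E\bw\|^2 = \|E\|_F^2\Bigl(1 - \tfrac{2t-1}{t^2}\,\theta^2\Bigr),
\end{equation*}
where $t := \sigma/\|\bw\|^2$ and $\theta$ is as in the statement. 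It then remains to show $\tfrac{2t-1}{t^2} \ge \alpha$ on the interval $t \in [1-\beta, 1+\beta]$ supplied by (1). The function $f(t) := (2t-1)/t^2$ is unimodal with maximum at $t = 1$, so its minimum over the interval is attained at an endpoint, and a short computation verifies that both $f(1-\beta) = \tfrac{1-2\beta}{(1-\beta)^2}$ and $f(1+\beta) = \tfrac{1+2\beta}{(1+\beta)^2}$ dominate $\alpha = \tfrac{1-2\beta}{1-\beta^2}$ for $\beta \in [0,1/3]$. Taking square roots gives the claim.

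Inequalities (3) and (4) follow by perturbation arguments anchored on (1) and (2). For (3) I would split $M\bc = \bw + (M\bc - \bw)$ so that $I - \bw(M\bc)^T/\sigma = (I - \bw\bw^T/\sigma) - \bw(M\bc - \bw)^T/\sigma$; the triangle inequality bounds the first piece by (2), while the second, a rank-one outer product, has Frobenius norm $\|E\bw\|\,\|M\bc - \bw\|/\sigma$, which I control using $\|E\bw\| \le \|E\|_F\|\bw\|$ (from $\theta \le 1$) and $\sigma \ge (1-\beta)\|\bw\|^2$ to recover the factor $(1-\beta)^{-1}$. Inequality (4) is the simplest: the Frobenius norm of the rank-one matrix equals $\|\ba - dG(\bx^\ast)^{-1}\bd\|\,\|M\bc\|/\sigma$, and bounding $\|M\bc\| \le (1+\beta)\|\bw\|$ and $\sigma \ge (1-\beta)\|\bw\|^2$ produces the prefactor $\tfrac{1+\beta}{1-\beta}$, which is at most $2$ precisely because $\beta \le 1/3$---this is where the range of $\beta$ is genuinely used.

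The main obstacle is inequality (2): unlike the others it is not a routine triangle-inequality estimate but a sharp contraction bound, and extracting the constant $\alpha$ requires the unimodality analysis of $(2t-1)/t^2$ over the admissible range of $t$. Everything else is bookkeeping once the identity $\bc^T\bd = (M\bc)^T\bw$ and the two-sided bound (1) are in hand; the symmetry of $M$ is essential throughout, and the restriction $\beta \le 1/3$ enters only to guarantee $\alpha \ge 3/8 > 0$ and the factor-of-two bound in (4).
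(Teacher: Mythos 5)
Your proof is correct: the paper itself omits the argument for this lemma, deferring to Broyden, Dennis, and Mor\'e (1973), and your computation is essentially the classical one from that source --- the exact Frobenius-norm identity $\|EN\|_F^2 = \|E\|_F^2\bigl(1 - \tfrac{2t-1}{t^2}\theta^2\bigr)$ for the oblique projector in part (2), minimized over $t \in [1-\beta,1+\beta]$ via the unimodality of $(2t-1)/t^2$, followed by rank-one triangle-inequality perturbations for parts (3) and (4) and the bound $(1+\beta)/(1-\beta) \le 2$ from $\beta \le 1/3$. The only cosmetic points are that part (2) as printed is missing the Frobenius norm on its left-hand side (you interpret it correctly) and that $\theta$ is undefined when $E = 0$, where all the inequalities hold trivially.
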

In the following proof, we will use results from the above lemma with $\bc = \bv$, $\bd= \bv$, and result $(d)$ particularly for $\ba = \bu$. We are now ready to show that the conditions of Theorem~\ref{th:convergence} are satisfied by our update formula. This allows us to construct the exact neighborhood for each $r \in (0,1)$ wherein our algorithm converges to the stationary point with rate $r$.

\begin{proof}[Theorem~\ref{th:qnm_convergence}]
	Firstly, we construct the neighborhoods $N_1$ and $N_2$ wherein our the updates in \eqref{eq:QN_update} and \eqref{eq:BFGS_update} are well-defined. Define $N_2 = \{H \in \mathbb{R}^{p\times p}: \|dG(\bx^\ast)\|\|H - dG(\bx^\ast)^{-1}\| < 1/2\}$ such that each $H \in N_2$ is non-singular, and there exists a constant $\nu > 0$ such that $\|H\| \leq \nu$ for all $H \in N_2$. Using Lemma~\ref{lemma:lipchitz}, we we also choose $\epsilon \text{ and } \rho$ such that $\max\{\|\bar{\bx} - \bx^\ast\|^d, \|\bx-\bx^\ast\|^d\} \leq \epsilon$ implies that \eqref{eq:ineq2} holds. In particular, if $\|\bx - \bx^\ast\| \leq \epsilon$ and $H \in N_2$, then $\bx \in D$ and 
	\[
	(1/\rho)\|\bx - \bx^\ast\| \leq \|G(\bx)\| \leq \rho \|\bx - \bx^\ast\|\,.
	\]
	As a consequence of applying the inequality above on Eq.\eqref{eq:QN_update}, we have
	\[
	\|\bs\| \leq \|H\|\|G(\bx)\| \leq \rho \|H\| \|\bx - \bx^\ast\| \leq \rho \nu \|\bx- \bx^\ast\|\,.
	\]
	Now define $N_1$ as the set of all $\bx\in \mathbb{R}^p$ such that 
	\[
	\|\bx - \bx^\ast\| < \min\{\epsilon/2, \epsilon/(2\rho), \epsilon/(2\rho \nu)\}
	\]
	where $\rho \epsilon < (3\mu_2)^{-1/d}$. Now if $\bx \in N_1$ then
	\begin{align} \label{eq:ineq4}
		\|\bs\| &\leq \rho \nu \|\bx-\bx^\ast\| \leq \epsilon/2 \qquad \text{ and } \qquad
		\|F(\bx) - \bx\| \leq \rho \|\bx - \bx^\ast\| \leq \epsilon/2\,.
	\end{align}
	If $N = ((N_1 \cap S) \times N_2) \cap N^\prime$ and $(\bx, H) \in N$, then $\bx, \bar{\bx}, \text{ and } F(\bx)$ lie in $D$ because using \eqref{eq:ineq4}
	\[
	\|\bar{\bx} - \bx^\ast\| \leq \|\bs\| + \|\bx - \bx^\ast\| \leq \epsilon \quad \text{and} \quad \|\bar{F(\bx)} - \bx^\ast\| \leq \|F(\bx) - \bx\| + \|\bx - \bx^\ast\| \leq \epsilon\,.
	\]
	Hence Inequality \eqref{eq:ineq2} shows that
	\begin{subequations} 
		\begin{align}
			\left(\dfrac{1}{\rho}\right)\|\bs\| &\leq \|\by\| \leq \rho \|\bs\|\,,\\
			\left(\dfrac{1}{\rho}\right)\|\bu\| &\leq \|\bv\| \leq \rho \|\bu\|\,, \label{eq:uv_ineq}
		\end{align}
	\end{subequations}
	
	and in particular
	\begin{subequations}
		\begin{align}
			\mu_2 \|\by\|^d &\leq \mu_2 (\rho \epsilon)^d \leq 1/3\\
			\mu_2 \|\bv\|^d &\leq \mu_2 (\rho \epsilon)^d \leq 1/3\,. \label{eq:beta_value}
		\end{align}
	\end{subequations}
	Thus $\by=0$ if and only if $\bs=0$, which happens if and only if $\bx=\bx^\ast$. This shows that the update function in Eq.\eqref{eq:QN_update} and \eqref{eq:BFGS_update} is well-defined for all $(\bx,H) \in N$. We now show that the update functions satisfy the conditions of Theorem \ref{th:convergence}. Since \eqref{eq:ineq3} and \eqref{eq:beta_value} imply that \eqref{eq:lemma2_condition} hold with $\beta = 1/3$, it follows from \eqref{eq:ineq3} and parts $(c)$, $(d)$ of Lemma \ref{lemma:MEM} applied on \eqref{eq:MEM} that
	\begin{align*}
		\|\bar{H} - dG(\bx^\ast)^{-1}\|_M &\leq \left[\sqrt{1 - \dfrac{3}{8}\theta^2} + \dfrac{3}{2}\mu_2\|\bv\|^d\right]\|H - dG(\bx^\ast)^{-1}\|_M\\
		&\qquad + \dfrac{2\|M\| \|\bu - dG(\bx^\ast)^{-1}\bv\|}{\|M^{-1}\bv\|}
	\end{align*}
	where $\theta = \dfrac{\|M[H - dG(\bx^\ast)^{-1}]\bv\|}{\|H- dG(\bx^\ast)^{-1}\|_M\|M^{-1}\bv\|}$.
	But for any $\bx\in N_1$,
	\[
	\|\bu - dG(\bx^\ast)^{-1}\bv\| \leq K\|dG(\bx^\ast)^{-1}\|\max\{\|F(\bx) - \bx^\ast\|^d, \|\bx-\bx^\ast\|^d\}\|\bu\|\,.
	\]
	Using Inequality \eqref{eq:uv_ineq},
	\begin{align} \label{eq:H_ineq}
		\|\bar{H} - dG(\bx^\ast)^{-1}\|_M &\leq \sqrt{1 - \dfrac{3}{8}\theta^2}\|H - dG(\bx^\ast)^{-1}\|_M \nonumber \\
		& \qquad + \max\{\|F(\bx) - \bx^\ast\|^d, \|\bx-\bx^\ast\|^d\}[\alpha_1\|H - dG(\bx^\ast)^{-1}\|_M + \alpha_2]
	\end{align}
	where $\alpha_1 = \left(\dfrac{2}{3}\right)(2 \rho)^d \mu_2$ and $\alpha_2 = 2 \rho K \|M\|^2 \|dG(\bx^\ast)^{-1}\|$. This inequality now satisfies the assumptions of Theorem \ref{th:convergence} and therefore, $\bx_k$ converges locally to $\bx^\ast$ as $k$ increases.

\end{proof}

\begin{proof}[Proof of Corollary \ref{cor:q-superlinear}]
	
	To prove the Q-superlinear convergence, we use the Corollary~\ref{cor:superlinear_conv} that guarantees the desired result if a subsequence of $\{H_k\}$ converges to $dG(\bx^\ast)^{-1}$. Define
	\[
	\theta_k = \dfrac{\|M[H_k - dG(\bx^\ast)^{-1}]\bv_k\|}{\|H_k - dG(\bx^\ast)^{-1}\|_M\|M^{-1}\bv_k\|}\,.
	\]
	Since $\sqrt{1 - \alpha} < 1 - \alpha/2$, from Eq.\eqref{eq:H_ineq} we get
	\begin{align} \label{eq:H_error_bound} 
		\dfrac{3}{16}\theta_k^2\|H - dG(\bx^\ast)^{-1}\|_M &\leq \left[\|H - dG(\bx^\ast)^{-1}\|_M - \|\bar{H} - dG(\bx^\ast)^{-1}\|_M\right] \nonumber \\
		& \qquad + \max\left\{\|F(\bx) - \bx^\ast\|^d, \|\bx-\bx^\ast\|^d\right\}[\alpha_1\|H - dG(\bx^\ast)^{-1}\|_M + \alpha_2]\,.
	\end{align}
	If there is a subsequence $\{H_k\}$ such that it converges to $dG(\bx^\ast)^{-1}$, then $\|H - dG(\bx^\ast)^{-1}\|_M$ converges to zero and we are done by Corollary~\ref{cor:superlinear_conv}. Otherwise, $\|H - dG(\bx^\ast)^{-1}\|_M$ is bounded by $\alpha$ but does not converge to zero. 
	Now summation on Eq.\eqref{eq:H_error_bound} yields
	\begin{align*}
		\dfrac{3}{16}\sum_{k=1}^{\infty}\theta_k^2\|H_k - dG(\bx^\ast)^{-1}\|_M &\leq \|H_0 - dG(\bx^\ast)^{-1}\|_M  - \|H_{\infty} - dG(\bx^\ast)^{-1}\|_M\\
		&+  [\alpha_1 \alpha + \alpha_2]\epsilon^d \sum_{k=1}^{\infty} r^{(k-1)d}\\
		&\leq 2\alpha + \dfrac{[\alpha_1 \alpha + \alpha_2]\epsilon^d}{1 - r^d} < \infty\,.
	\end{align*}
	and since
	\begin{align*}
		\sum_{k=1}^{\infty}\theta_k^2\|H_k - dG(\bx^\ast)^{-1}\|_M &= \sum_{k=1}^{\infty}\dfrac{\|M[H_k - dG(\bx^\ast)^{-1}]\bv_k\|^2}{\|H_k - dG(\bx^\ast)^{-1}\|_M\|M^{-1}\bv_k\|^2}\\
		&\geq \dfrac{1}{\alpha} \sum_{k=1}^{\infty}\dfrac{\|M[H_k - dG(\bx^\ast)^{-1}]\bv_k\|^2}{\|M^{-1}\bv_k\|^2}\,,
	\end{align*}
	this forces the following limit to converge to zero. 
	\begin{equation} \label{eq:lim_(H-G'y)/(y)}
		\lim_{k \to \infty}\dfrac{\|[H_k - dG(\bx^\ast)^{-1}]\bv_k\|}{\|\bv_k\|} = 0\,.
	\end{equation}
	Using $H_k \bv_k = H_k G(F(\bx_k)) - H_k G(\bx_k) = H_k G(F(\bx_k)) + \bs_k$, we can write
	\[
	[H_k - dG(\bx^\ast)^{-1}]\bv_k = H_k G(F(\bx_{k})) - dG(\bx^\ast)^{-1}[\bv_k - dG(\bx^\ast)s_k]\,.
	\]
	At the end of the proof of Theorem~\ref{th:convergence}, we prove that there exists $\upsilon > 0$ such that $\|H_k\| \leq \upsilon$. Using the above equation, Lemma~\ref{lemma:lipchitz}, and the fact that $\|\bx_{k+1} - \bx^\ast\| \leq \|\bx_k - \bx^\ast\|$, we get
	
	\begin{align} \label{eq:G(F(x))_upperbound}
		\|G(F(\bx_{k}))\| &\leq \|H_k^{-1}\|\left[ \|[H_k - dG(\bx^\ast)^{-1}]\bv_k\| + \|dG(\bx^\ast)^{-1}\| \|\bv_k - dG(\bx^\ast)\bs_k\| \right] \nonumber \\
		& \leq \upsilon \big[ \|[H_k - dG(\bx^\ast)^{-1}]\bv_k\| +  \|dG(\bx^\ast)^{-1}\| K \|\bx_k - \bx^\ast\|^d \|\bu_k\| \nonumber 
		\\ & \qquad + \|dG(\bx^\ast)^{-1}\| \|G^{\prime} (\bx^\ast) (\bs_k - \bu_k)\| \big] \nonumber 
		\\ &\leq \upsilon \big[ \|[H_k - dG(\bx^\ast)^{-1}]\bv_k\| + \|dG(\bx^\ast)^{-1}\| K \|\bx_k - \bx^\ast\|^d \|\bu_k\| \nonumber
		\\  & \qquad + \|dG(\bx^\ast)^{-1}\| \|dG(\bx^\ast)\| \|(\bx_{k+1} - F(\bx_k) )\| \big] \, .
	\end{align}

	A critical assumption is the condition that $\lim_{k \to \infty} \|\bx_{k+1} - F(\bx_k)\|/\|\bx_k - \bx^\ast\| = 0$. Since $\|\bu_k\| \geq (1/\rho)\|\bv_k\|$ and appealing to the limit in Eq.\eqref{eq:lim_(H-G'y)/(y)},
	\
	\[
	\lim_{k \to \infty} \dfrac{\|G(F(\bx_{k}))\|}{\|\bu_k\|} = 0 .
	\]
	Now using Eq.\eqref{eq:ineq2}, we know that $\|F(\bx_k) - \bx^\ast\| \leq \rho\| G(F(\bx_k))\|$ and $\|\bx_k - \bx^\ast\| \geq \|\bu_k\|/\rho$. Therefore we have the string of inequalities
	\[
	\dfrac{\|\bx_{k+1} - \bx^\ast\|}{\|\bx_{k} - \bx^\ast\|} \leq \dfrac{\|\bx_{k+1} - F(\bx_k)\|}{\|\bx_{k} - \bx^\ast\|} + \dfrac{\|F(\bx_k) - \bx^\ast\|}{\|\bx_{k} - \bx^\ast\|} \leq \dfrac{\|\bx_{k+1} - F(\bx_k)\|}{\|\bx_{k} - \bx^\ast\|} + \dfrac{\rho^2\| G(F(\bx_k))\|}{\|\bu_k\|}\,.
	\]
	Q-superlinearity follows as the upperbound of ${\|\bx_{k+1} - \bx^\ast\|}/{\|\bx_{k} - \bx^\ast\|}$ goes to $0$ as $k \to \infty$. This concludes the proof of Q-superlinearity of our quasi-Newton method for MM acceleration in a neighborhood of the limit point.
\end{proof}

\subsection{Examples}

\subsubsection{Truncated Beta Binomial}

As discussed earlier in Example~\ref{ex:trunc.beta.binom}, we have the \cite{lidwell1951observations} dataset of cold incidences in households of size four. These holuseholds are classified as: (a) adults only, (b) adults and school children, (c) adults and infants, and (d) adults, school children, and infants. Only households with at least one cold incidence are reported, hence warranting the use of zero-truncated beta-binomial distribution to model the dataset.

We have already presented a comparative analysis of different MM acceleration methods for dataset subcategory (a) in Table~\ref{tab:beta_binom1}. Using the same tolerance $\epsilon$ and starting points $(\pi_0, \alpha_0)$, we run the methods for the other three subcategories and present the unified results in Table~\ref{tab:beta_binom2}. It is worth to note that while at least one SQUAREM methods fail to provide acceleration for each subcategory, BQN consistently accelerates over the slow MM algorithm. 

\begin{table}
	
	\centering
	\begin{tabular}{|c| c c c c| c c|} 
	\hline
	Household & \multicolumn{4}{c|}{Number of cases} & \multicolumn{2}{c|}{MLE} \\ [0.5ex] 
	& 1 & 2 & 3 & 4 & $\hat{\pi}$ & $\hat{\alpha}$ \\ [0.5ex]
	\hline
	(a) & 15 & 5 & 2 & 2 &  0.0000 & 0.6151 \\ 
	(b) & 12 & 6 & 7 & 6 & 0.1479 & 1.1593 \\
	(c) & 10 & 9 & 2 & 7 & 0.0000 & 1.6499 \\
	(d) & 26 & 15 & 3 & 9& 0.0001 & 1.0594 \\[1ex] 	
	\hline
	\end{tabular}
	\caption{The Lidwell and Somerville (1951) cold data on households of size $4$ and corresponding MLEs 	under the truncated beta-binomial model.}	
	\label{tab:bet_binom_data}
\end{table}

\begin{table}
	\caption{\label{tab:beta_binom2}Truncated beta binomial: comparison of algorithms for the Lidwell and Somerville
		Data. The starting point is $(\pi,\alpha) = (0.5, 1)$, the stopping criterion
		is $\epsilon = 10^{-7}$, and the number of parameters is two.}
	\centering
	\fbox{\begin{tabular}{c | c c c c c}
			Data & Algorithm & -ln L & Fevals & Iterations & Time (in sec) \\ [0.5ex] 
			\hline
			
			(b) & MM & 41.7286 & 5492 & 5492 & 0.026 \\
			& BQN, $q=1$ & 41.7286 & 1012 & 507 & 0.036\\
			& SqS1 & 41.7286 & 248 & 210 & 0.019\\
			& SqS2 &41.7286 & 1553 & 1148 & 0.106\\
			& SqS3 &41.7286 & 79 & 40 & 0.006\\
			& ZAL, $q=2$ & 41.7286 & 1136 & 1132 & 0.063\\ [1ex]
			(c) & MM & 37.3586 & 61843 &  61843 & 0.323\\
			& BQN, $q=1$ & 37.3589 & 1864 & 933 & 0.062\\
			& SqS1   & 37.3587 & 1370 & 1345 & 0.122 \\
			& SqS2 & 37.3582 & 9649 & 8966 & 0.977 \\
			& SqS3 & 37.3582 & 145 & 73 & 0.010\\
			& ZAL, $q=2$ & 37.3582 & 28 & 24 & 0.004\\[1ex]
			(d) & MM & 65.0423 & 25026 & 25026 & 0.132     \\
			& BQN, $q=1$ & 65.0435 & 268 & 135 &  0.007 \\
			& SqS1 & 65.0413 & 1648 & 1622 & 0.136 \\
			& SqS2 & 65.0420 & 5727 & 5443 & 0.472 \\
			& SqS3 & 65.0402 & 97 & 49 & 0.007 \\
			& ZAL, $q=2$ & 65.0402 & 25 & 21 & 0.003 \\ [1ex]
		
		\end{tabular}
	}
	
\end{table}

\end{document}